\newtheorem{Theorem}{Theorem}[section]
\newtheorem{Lemma}[Theorem]{Lemma}
\newtheorem{Cor}[Theorem]{Corollary}
\newtheorem{Prop}[Theorem]{Proposition}
\newtheorem{Rem}[Theorem]{Remark}
\def\cF{\mathcal{F}}
\def\cS{\mathcal{S}}
\def\Erw{\mathbb{E}}
\def\N{\mathbb{N}}
\def\Prob{\mathbb{P}} 
\def\R{\mathbb{R}}
\def\bS{\textit{\bfseries S}}
\def\bY{\hspace{-2.5pt}\textit{\bfseries Y}}
\def\eps{\varepsilon}
\def\vth{\vartheta}
\def\1{\vec{1}}
\def\3{{\ss}}
\def\eqdist{\stackrel{d}{=}}
\def\RA{\Rightarrow}
\def\wh{\widehat}
\def\dual{{}^{\#}\hspace{-1.1pt}}
\def\bM{\textit{\bfseries M}}
\begin{document}

\title*{An Arcsine Law for Markov Random Walks}
\titlerunning{An Arcsine Law for Markov Random Walks}
\author{Gerold Alsmeyer and Fabian Buckmann}
\institute{Inst.~Math.~Stochastics, Department
of Mathematics and Computer Science, University of M\"unster,
Orl\'eans-Ring 10, D-48149 M\"unster, Germany.\at
\email{gerolda@math.uni-muenster.de, f\_buck01@uni-muenster.de}}

\maketitle

\abstract{The classic arcsine law for the number $N_{n}^{>}:=n^{-1}\sum_{k=1}^{n}\1_{\{S_{k}>0\}}$ of positive terms, as $n\to\infty$, in an ordinary random walk $(S_{n})_{n\ge 0}$ is extended to the case when this random walk is governed by a positive recurrent Markov chain $(M_{n})_{n\ge 0}$ on a countable state space $\cS$, that is, for a Markov random walk $(M_{n},S_{n})_{n\ge 0}$ with positive recurrent discrete driving chain. More precisely, it is shown that $n^{-1}N_{n}^{>}$ converges in distribution to a generalized arcsine law with parameter $\rho\in [0,1]$ (the classic arcsine law if $\rho=1/2$) iff the Spitzer condition
$$ \lim_{n\to\infty}\frac{1}{n}\sum_{k=1}^{n}\Prob_{i}(S_{n}>0)\ =\ \rho $$
holds true for some and then all $i\in\cS$, where $\Prob_{i}:=\Prob(\cdot|M_{0}=i)$ for $i\in\cS$. It is also proved, under an extra assumption on the driving chain if $0<\rho<1$, that this condition is equivalent to the stronger variant
$$ \lim_{n\to\infty}\Prob_{i}(S_{n}>0)\ =\ \rho. $$
For an ordinary random walk, this was shown by Doney \cite{Doney:95} for $0<\rho<1$ and by Bertoin and Doney \cite{BertoinDoney:97} for $\rho\in\{0,1\}$.}

\bigskip

{\noindent \textbf{AMS 2000 subject classifications:}
60J15 (60J10, 60G50) \ }

{\noindent \textbf{Keywords:} Markov random walk, arcsine law, fluctuation theory, Spitzer condition, Spitzer-type formula}

\section{Introduction}
The purpose of this note is to provide an arcsine law for the average number of positive sums
$S_{k}=\sum_{l=1}^{n}X_{l}$ up to time $n$ as $n\to\infty$ when the increments $X_{1},X_{2},\ldots$ are \emph{modulated} or \emph{driven} by a positive recurrent Markov chain $\bM=(M_{n})_{n\ge 0}$ with countable state space $\cS$. More precisely, the $X_{n}$ are conditionally independent given $\bM$, and
$$ \Prob((X_{1},\ldots,X_{n})\in\cdot\,|M_{0}=i_{0},\ldots,M_{n}=i_{n})\ =\ K_{i_{0}i_{1}}\otimes\ldots\otimes K_{i_{n-1}i_{n}} $$
for all $n\ge 1$, $i_{0},\ldots,i_{n}\in\cS$ and some stochastic kernel $K$ from $\cS^{2}$ to $\R$. Then $(M_{n},S_{n})_{n\ge 0}$, and sometimes also its additive part $(S_{n})_{n\ge 0}$, is called a \emph{Markov random walk (MRW)} or \emph{Markov additive process} and $\bM$ its \emph{driving chain}. Let $P=(p_{ij})_{i,j\in\cS}$ denote the transition matrix of $\bM$ and $\pi=(\pi_{i})_{i\in\cS}$ its unique stationary distribution. For any $i\in\cS$, we put further $\Prob_{i}:=\Prob(\cdot|M_{0}=i)$, $\Prob_{\pi}:=\sum_{i\in\cS}\pi_{i}\,\Prob_{i}$ and denote by $(\tau_{n}(i))_{n\ge 1}$ the renewal sequence of successive return epochs of  $\bM$ to $i$.

\vspace{.1cm}
If there exists a measurable function $g:\cS\to\R$ such that $K_{ij}=\delta_{g(j)-g(i)}$, thus
$X_{n}=g(M_{n})-g(M_{n-1})$ a.s. for all $n\in\N$, then the MRW is called \emph{null-homologous}, a term coined by Lalley \cite{Lalley:86}, and it is called \emph{nontrivial} otherwise. Here ``a.s.'' means $\Prob_{i}$-a.s. for all $i\in\cS$.

\vspace{.1cm}
In \cite{AlsBuck:17c}, a wide range of fluctuation-theoretic results for $(M_{n},S_{n})_{n\ge 0}$ has been established by the natural approach of drawing on corresponding results for the embedded ordinary random walks $(S_{\tau_{n}(i)})_{n\ge 1}$, $i\in\cS$, in combination with a thorough analysis of the excursions of the $S_{n}$ between the successive visits of the driving chain to a state $i$. Due to the fundamental observation that essential fluctuation-theoretic properties are shared by all embedded random walks (solidarity), the particular choice of $i$ does not matter for this approach. Here we will show that, if the limit
\begin{equation}\label{eq:Spitzer condition MRW}
\rho\ :=\ \lim_{n\to\infty}\frac{1}{n}\sum_{k=1}^{n}\Prob_{i}(S_{k}>0),
\end{equation}
exists for some $i\in\cS$, then it does so and is the same for any $i\in\cS$ (so we may replace $\Prob_{i}$ with $\Prob_{\pi}$), further satisfies
\begin{equation}\label{eq:Spitzer condition embedded RW}
\rho\ =\ \lim_{n\to\infty}\frac{1}{n}\sum_{k=1}^{n}\Prob_{i}(S_{\tau_{k}(i)}>0)
\end{equation}
for all $i\in\cS$, and entails that an arcsine law holds for
$$N_{n}^{>}\ :=\ \sum_{k=1}^{n}\1_{\{S_{k}>0\}} \quad\left( \text{and}\quad N_{n}^{\leqslant}\ :=\ n-N_{n}^{>}\ =\ \sum_{k=1}^{n}\1_{\{S_{k}\le 0\}}\right). $$
The precise statement of the result is given as Theorem \ref{thm:MRW arcsine} below. Validity of \eqref{eq:Spitzer condition embedded RW} is known as \emph{Spitzer's condition}
for the ordinary zero-delayed random walk $(S_{\tau_{n}(i)})_{n\ge 0}$ under $\Prob_{i}$, where $\tau_{0}(i):=0$, and is in fact equivalent to
\begin{equation}\label{eq:Doney condition embedded RW}
\rho\ =\ \lim_{n\to\infty}\Prob_{i}(S_{\tau_{n}(i)}>0),
\end{equation}
as shown by Doney \cite{Doney:95} for $0<\rho<1$, and by Bertoin and Doney \cite{BertoinDoney:97} for $\rho\in\{0,1\}$. Theorem \ref{thm:MRW arcsine} establishes also, under a second moment condition on $\tau(i)$ if $0<\rho<1$, the corresponding equivalence of \eqref{eq:Spitzer condition MRW} with
\begin{equation}\label{eq:Doney condition MRW}
\rho\ =\ \lim_{n\to\infty}\Prob_{i}(S_{n}>0)
\end{equation}
for any $i\in\cS$.

\vspace{.1cm}
Let $(AS(\theta))_{\theta\in[0,1]}$ be the family of generalized arcsine laws, i.e. $AS(0):=\delta_0$, $AS(1):=\delta_1$ and $AS(\theta)$ for $\theta\in(0,1)$ equals the beta distribution with parameters $1-\theta$ and $\theta$ and density
$$ \frac{\sin(\pi\,\theta)}{\pi}\,\frac{1}{x^{1-\theta}\,(1-x)^\theta}\,\mathbf{1}_{(0,1)}(x). $$
For $\theta=\frac{1}{2}$, we get the classical arcsine law with distribution function
$$ AS(1/2)((-\infty,x])\ =\ \frac{2}{\pi} \,\arcsin(\sqrt{x}),\quad x\in[0,1]. $$

\vspace{.1cm}
The following arcsine law for nontrivial MRW generalizes the corresponding classical result for ordinary random walk due to Spitzer \cite[Theorem 7.1]{Spitzer:56}, which in turn extended earlier versions by L\'evy \cite[Corollaire 2, p. 303]{Levy:40} and Sparre Andersen \cite[Theorem 3]{Andersen:54}.

\begin{Theorem}[Arcsine law for MRWs]\label{thm:MRW arcsine}
Let $(M_{n},S_{n})_{n\ge 0}$ be a nontrivial MRW with positive recurrent driving chain and consider the following assertions for arbitrary $i\in\cS$ and $\rho\in [0,1]$:
\begin{description}[(c)]\itemsep2pt
\item[(a)] Under $\Prob_{i}$,
\begin{equation}\label{eq:MRW arcsine}
\frac{N_{n}^{>}}{n}\ \xrightarrow{d}\ AS(\rho)\quad\text{and}\quad \frac{ N_{n}^{\leqslant}}{n}\ \xrightarrow{d}\  AS(1-\rho)
\end{equation}
as $n\to\infty$, where $\xrightarrow{d}$ means convergence in distribution.
\item[(b)] $\lim_{n\to\infty}n^{-1}\sum_{k=1}^{n}\Prob_{i}(S_{\tau_{k}(i)}>0)$ exists and equals $\rho$.
\item[(c)] Spitzer condition: $\lim_{n\to\infty}n^{-1}\sum_{k=1}^{n}\Prob_{i}(S_{k}>0)$ exists and equals $\rho$.
\item[(d)] Strong Spitzer condition: $\lim_{n\to\infty}\Prob_{i}(S_{n}>0)$ exists and equals $\rho$.
\end{description}
Then (a)--(c) are equivalent assertions and equivalence with (d) also holds true provided that $\Erw_{i}\tau(i)^{2}<\infty$ in the case $0<\rho<1$. Moreover, these assertions either hold for all $i\in\cS$ with the same $\rho$ or none.
\end{Theorem}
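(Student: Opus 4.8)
The plan is to establish the chain of equivalences $(a)\Leftrightarrow(b)\Leftrightarrow(c)$ by reducing everything to the embedded random walk $(S_{\tau_{n}(i)})_{n\ge 0}$, which is a genuine zero-delayed ordinary random walk under $\Prob_{i}$, and then invoking the classical Spitzer arcsine law together with the cited results of Doney and Bertoin--Doney. The cornerstone is the \emph{solidarity} principle quoted in the introduction: since the embedded walks for different states $i$ share all essential fluctuation-theoretic properties, it suffices to prove the equivalences for one fixed $i$, and the final clause (``all $i$ or none, with the same $\rho$'') will follow once we show that $\rho$ in \eqref{eq:Spitzer condition MRW} is independent of $i$.

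First I would treat $(b)\Leftrightarrow(a)$. Condition (b) is exactly Spitzer's condition for the ordinary random walk $(S_{\tau_{n}(i)})_{n\ge 0}$, so the classical Spitzer--Sparre Andersen arcsine law immediately gives that $\ilim n^{-1}\wh N_{n}^{>}\idist AS(\rho)$, where $\wh N_{n}^{>}:=\sum_{k=1}^{n}\1_{\{S_{\tau_{k}(i)}>0\}}$ counts positive values of the \emph{embedded} walk. The real work is the passage from $\wh N_{n}^{>}/n$ to the original $N_{n}^{>}/n$. Here I would use the excursion decomposition of $(S_{n})$ between successive returns of $\bM$ to $i$: on the $k$-th excursion (of random length $\tau_{k}(i)-\tau_{k-1}(i)$) the path contributes a random number of positive terms sandwiched between two values anchored at $S_{\tau_{k-1}(i)}$ and $S_{\tau_{k}(i)}$. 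The key estimate is that, after normalizing by $n$, the within-excursion fluctuations are asymptotically negligible: by positive recurrence the renewal counting process $\nu(n):=\max\{k:\tau_{k}(i)\le n\}$ satisfies $\nu(n)/n\to 1/\Erw_i\tau(i)$ a.s., and an ergodic/renewal-reward argument shows $N_{n}^{>}/n$ and $\wh N^{>}_{\nu(n)}/\nu(n)$ have the same limit law. This transfer argument—controlling the excursions uniformly enough to identify the two normalized counts—is the step I expect to be the main obstacle, since one must rule out a nonnegligible accumulation of sign changes inside long excursions; the natural tool is a maximal inequality for the excursion contributions combined with the $L^1$ law of large numbers for the i.i.d.\ excursion lengths.

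Next I would handle $(c)\Leftrightarrow(b)$, the equivalence of the two Cesàro-type Spitzer conditions phrased at the \emph{original} time scale versus the \emph{embedded} time scale. Writing $\Prob_i(S_k>0)$ and summing, I would again split the sum $\sum_{k=1}^{n}\1_{\{S_k>0\}}$ along excursions and take expectations; a Spitzer-type formula (the combinatorial identity underlying Spitzer's condition, available in the form cited as \cite{AlsBuck:17c}) converts the Cesàro average of $\Prob_i(S_k>0)$ into the corresponding average over the embedded walk, up to a correction that is $o(1)$ by the same renewal-reward control as above. Concretely, one shows $n^{-1}\sum_{k=1}^{n}\Prob_i(S_k>0)$ and $m^{-1}\sum_{k=1}^{m}\Prob_i(S_{\tau_k(i)}>0)$ converge together (with $m\sim n/\Erw_i\tau(i)$), so that the existence of either limit forces the other with the \emph{same} value $\rho$. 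Because the embedded-walk version of Spitzer's condition is, by the solidarity result, insensitive to the choice of $i$, this simultaneously proves the ``all $i$ or none, same $\rho$'' assertion.

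Finally, for the equivalence with the strong condition (d) I would transfer Doney's theorem and the Bertoin--Doney theorem from the embedded walk to $(S_n)$. For the boundary cases $\rho\in\{0,1\}$ no moment hypothesis is needed: $\Prob_i(S_{\tau_n(i)}>0)\to\rho$ by Bertoin--Doney, and the excursion estimate already shows $\Prob_i(S_n>0)$ has the same limit. For the interior case $0<\rho<1$, Doney's equivalence gives $\Prob_i(S_{\tau_n(i)}>0)\to\rho$, and here I would use the assumption $\Erw_i\tau(i)^2<\infty$ to upgrade the transfer: the second-moment bound yields a rate on the renewal process ($\nu(n)$ concentrates around $n/\Erw_i\tau(i)$ with fluctuations of order $\sqrt n$) sharp enough to interpolate the one-dimensional marginals $\Prob_i(S_n>0)$ between consecutive embedded times $\tau_k(i)$, so that $\Prob_i(S_n>0)\to\rho$ as well. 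The converse direction, $(d)\Rightarrow(c)$, is immediate since convergence of a sequence implies convergence of its Cesàro averages to the same limit. The delicate point throughout is that the interior case genuinely requires the extra moment assumption to control the oscillation of the marginal $\Prob_i(S_n>0)$ across an excursion, whereas the averaged conditions (b) and (c) are robust to these oscillations.
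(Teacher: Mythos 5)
Your outline for the equivalence of (a)--(c) follows essentially the paper's route: reduce to the embedded walk $(S_{\tau_n(i)})_{n\ge 0}$, apply the classical Spitzer arcsine law, and control the excursions between successive returns to $i$. Two steps are glossed over, though. First, the transfer is not a comparison of $N_n^>/n$ with $\wh N_{\nu(n)}^>/\nu(n)$: since the $k$-th excursion contributes up to $\chi_k(i)=\tau_k(i)-\tau_{k-1}(i)$ time points, what must actually be shown is that the \emph{length-weighted} count $\frac{\pi_i}{n}\sum_{k\le n}\chi_k(i)\1_{\{S_{\tau_{k-1}(i)}>0\}}$ still converges to $AS(\rho)$; the paper isolates this as a separate lemma proved by a martingale weak law of large numbers, and your ``maximal inequality plus $L^1$ law of large numbers'' is in that spirit, so this part is repairable. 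Second, the clause ``all $i$ or none, with the same $\rho$'' cannot be delegated to a citable solidarity principle: that the embedded walks at \emph{different} states satisfy Spitzer's condition with the same $\rho$ is part of what is being proved here. The paper obtains it by first establishing (c) for one state and then comparing $\Prob_j(S_k>0)$ with $\Prob_i(S_k>\pm x)$ by conditioning on $\tau(i)$ and the value $S_{\tau(i)}$ under $\Prob_j$ (together with an extended arcsine law for level $x$ instead of $0$); some such cross-state argument is needed and is absent from your sketch.

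The genuine gap is in (d) for $0<\rho<1$. Knowing $\Prob_i(S_{\tau_n(i)}>0)\to\rho$ (Doney's theorem for the embedded walk) does not allow you to ``interpolate the one-dimensional marginals between consecutive embedded times'': the event $\{S_n>0\}$ at a deterministic $n$ sees the walk in mid-excursion, where $M_n$ occupies an arbitrary state, and no rate of concentration of the renewal counting process converts information at the random times $\tau_k(i)$ into information at fixed times. The paper instead proves a new Spitzer-type combinatorial identity,
$$ \Prob_i(M_n=i,\,S_n>0)\ =\ \sum_{k\ge 1}\frac{n}{k}\,\Erw_i\left(\frac{\sigma(k)}{\tau_{\sigma(k)}(i)}\,\1_{\{\tau_{\sigma(k)}(i)=n\}}\right), $$
obtained by applying the cyclic-rearrangement argument to the i.i.d.\ blocks between returns to $i$, and then runs Doney's local-limit-theorem analysis on the right-hand side to get $\Prob_i(M_n=i,S_n>0)\to\rho\pi_i$, from which $\Prob_i(S_n>0)\to\rho$ follows by summing over the terminal state. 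The hypothesis $\Erw_i\tau(i)^2<\infty$ enters not to give a $\sqrt{n}$ rate for the renewal process but through the Chow--Lai estimate $n\,\Prob_i\bigl(\sup_{m\ge n}m^{-1}(\tau_m(i)-m\pi_i^{-1})>\eps\bigr)\to 0$, which is what kills the error term arising from replacing $\sigma(k)/\tau_{\sigma(k)}(i)$ by $\pi_i$ inside the sum. Without an identity of this kind your argument for $0<\rho<1$ does not close. For $\rho\in\{0,1\}$ your instinct that no moment condition is needed is correct, but the mechanism is again different: one adapts the Kesten/Bertoin--Doney lemma to show $\Prob_i(S_k>0\text{ for all }\tau_{2n}(i)\le k\le\tau_{rn}(i))\to 1$, i.e.\ uniform positivity over whole blocks of excursions, not merely at their endpoints.
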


\begin{Rem}\label{rem:Freedman paper}\rm
The previous result, more precisely its implication ``(c)$\RA$(a)'', was already shown by Freedman \cite{Freedman:63} for the special case when $X_{n}=g(M_{n})$ for some measurable function $g$ and thus $S_{n}$ forms an additive functional of the driving chain. Regarding $g$, he further assumed $\Erw_{\pi}|X_{1}|=\sum_{i}\pi_{i}|g(i)|<\infty$, a condition not needed here.
\end{Rem}

\begin{Rem}\label{rem:CLT}\rm
In analogy to ordinary random walks, the classical arcsine law, that is \eqref{eq:MRW arcsine} with $\rho=\frac{1}{2}$, is obtained if $(S_{n})_{n\ge 0}$ satisfies a central limit theorem without centering, viz.
\begin{equation}\label{eq:CLT S_n}
\wh{S}_{n}\ :=\ \frac{S_{n}}{n^{1/2}}\ \xrightarrow{d}\ \textit{Normal}\,(0,\theta^{2})
\end{equation}
for some $\theta>0$. Namely, we then have
$$ \lim_{n\to\infty}\Prob(S_{n}>0)\ =\ \lim_{n\to\infty}\Prob(\wh{S}_{n}>0)\ =\ \frac{1}{2} $$
whence the assertion follows from part (d) of the above theorem. Note that $S_{n}$ may be viewed as an additive functional of the positive Harris chain $(M_{n},X_{n})_{n\ge 0}$. For such functionals, sufficient conditions for the validity of the central limit theorem, which typically include $\Erw_{\pi}X_{1}=0$ and $\Erw_{\pi}X_{1}^{2}<\infty$, have been studied by many authors, see e.g. Gordin and Lif\v sic \cite{GordinLifsic:78}, Woodroofe \cite{Woodroofe:92}, Maxwell and Woodroofe \cite{MaxwellWood:00}, Derriennic and Lin \cite{DerriennicLin:01} and also the references given therein.

Let us further mention that, in view of Condition (b) and by similar reasoning as before, the classical arcsine law $\textit{AS}(1/2)$ is also obtained if one (and by solidarity then all) of the ordinary embedded random walks $(S_{\tau_{n}(i)})_{n\ge 0}$ satisfies the central limit theorem without centering, which is well-known to be true if $\Erw_{i}S_{\tau(i)}=\Erw_{\pi}X_{1}\,\Erw_{i}\tau(i)=0$ and $\Erw_{i}S_{\tau(i)}^{2}<\infty$, thus if $(S_{n})_{n\ge 0}$ has stationary drift zero and finite variance over cycles determined by returns of the driving chain to a state $i$.
\end{Rem}

\begin{Rem}\rm
Albeit almost trivial, we note that $n^{-1}N_{n}^{>}$ either converges in distribution to a generalized arcsine law $\textit{AS}(\rho)$ or not at all. Namely, convergence to some law $Q$, say, on $[0,1]$ entails (by dominated convergence) that Theorem \ref{thm:MRW arcsine}(c) holds with $\rho:=\int x\,Q(dx)$ and thus $Q=\textit{AS}(\rho)$ by Theorem \ref{thm:MRW arcsine}(a).
\end{Rem}

\begin{Rem}\label{rem:reflected MRW}\rm
Since $(S_{\tau_{n}(i)})_{n\ge 0}$ is an ordinary random walk under $\Prob_{i}$, validity of Assertion (b) entails $\lim_{n\to\infty}n^{-1}\sum_{k=1}^{n}\Prob_{i}(S_{\tau_{k}(i)}<0)=1-\rho$
and thus validity of Theorem \ref{thm:MRW arcsine} for $(M_{n},-S_{n})_{n\ge 0}$ as well (with $1-\rho$ instead of $\rho$). As a particular consequence, we infer that
\begin{equation}\label{eq:S_n=0 behavior}
\lim_{n\to\infty}\frac{1}{n}\sum_{k=1}^{n}\Prob_{i}(S_{k}=0)\ =\ 0
\end{equation}
for all $i\in\cS$.
\end{Rem}

\begin{Rem}\label{rem:Spitzer condition P_pi}\rm
Let us further point out that Theorem \ref{thm:MRW arcsine}(c) for all $i\in\cS$ is also equivalent to
\begin{equation}\label{eq:Spitzer condition P_pi}
\lim_{n\to\infty}\frac{1}{n}\sum_{k=1}^{n}\Prob_{\pi}(S_{k}>0)\ =\ \rho.
\end{equation}
While the necessity of \eqref{eq:Spitzer condition P_pi} is obvious, the sufficiency proof needs a little more care and is deferred to Remark \ref{rem:Spitzer condition P_pi proof}.
\end{Rem}

\begin{Rem}\label{rem:extra assumption}\rm
Regarding the validity of the strong Spitzer condition (d), we do not know whether the additional assumption $\Erw_{i}\tau(i)^{2}<\infty$ in the case $0<\rho<1$ is really necessary but will provide an explanation in support of this in Remark \ref{rem2:extra assumption} at the end of Subsection \ref{subsec:0<rho<1}. On the other hand, the assumption is not very restrictive and particularly valid if the driving chain is geometrically ergodic or, a fortiori, has finite state space.
\end{Rem}

\begin{Rem}\label{rem:other arcsine laws}\rm
In the case of an ordinary random walk $(S_{n})_{n\ge 0}$, two further arcsine laws, namely for 
$$ L_{n}\,:=\,\min\{0\le k\le n: S_{k}=\max_{0\le l\le n}S_{l}\}\quad\text{and}\quad
L_{n}'\,:=\,\max\{0\le k\le n:S_{k}=\min_{0\le l\le n}S_{l}\}, $$
are directly derived by establishing $(N_{n}^{>},S_{n})\eqdist (L_{n},S_{n})\eqdist (L_{n}',S_{n})$ for all $n\in\N_{0}$, where $\,\eqdist\,$ means equality in law. Since these distributional identities are no longer at hand in the Markov-modulated situation, arcsine laws for $L_{n}$ and $L_{n}'$, if valid at all, require new arguments that will not be discussed here.
\end{Rem}

It is natural to expect, and confirmed by the next corollary, that the assertions of Theorem \ref{thm:MRW arcsine}, if valid for $(M_{n},S_{n})_{n\ge 0}$, also hold for the dual MRW $(\dual{M}_{n},\dual{S}_{n})_{n\ge 0}$. Recall that, in the notation given above, the dual chain $(\dual{M}_{n})_{n\ge 0}$ has transition matrix $\dual{P}=(\pi_{j}p_{ji}/\pi_{i})_{i,j\in\cS}$, while the conditional law of $\dual{X}_{n}=\dual{S}_{n}-\dual{S}_{n-1}$ given $\dual{M}_{n-1}=i,\dual{M}_{n}=j$ equals $\dual{K}_{ij}=K_{ji}$ for all $i,j\in\cS$. Since the embedded random walks $(S_{\tau_{n}(i)})_{n\ge 0}$ and $(\dual{S}_{\dual{\tau}_{n}(i)})_{n\ge 0}$ have the same distribution under $\Prob_{i}$ (w.l.o.g. put $\dual{M}_{0}=M_{0}$), we see that Theorem \ref{thm:MRW arcsine}(b), if valid for $(M_{n},S_{n})_{n\ge 0}$, also holds for the dual MRW.
The announced corollary is now immediate.

\begin{Cor}
If, for some $\rho\in [0,1]$ and some/all $i\in\cS$, the MRW $(M_{n},S_{n})_{n\ge 0}$ satisfies Theorem \ref{thm:MRW arcsine}(a)--(c), or (a)--(d), then the same holds true for its dual $(\dual{M}_{n},\dual{S}_{n})_{n\ge 0}$.
\end{Cor}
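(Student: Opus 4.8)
The plan is to deduce the corollary directly from the discussion immediately preceding it, which already establishes the essential duality identity. The key observation, stated in the excerpt, is that the embedded random walks $(S_{\tau_{n}(i)})_{n\ge 0}$ and $(\dual{S}_{\dual{\tau}_{n}(i)})_{n\ge 0}$ have the same distribution under $\Prob_{i}$ (having put $\dual{M}_{0}=M_{0}$). Since Assertion (b) of Theorem \ref{thm:MRW arcsine} is formulated entirely in terms of the distribution of the embedded walk $(S_{\tau_{n}(i)})_{n\ge 0}$, this distributional identity shows at once that (b) holds for $(M_{n},S_{n})_{n\ge 0}$ with parameter $\rho$ if and only if it holds for the dual MRW $(\dual{M}_{n},\dual{S}_{n})_{n\ge 0}$ with the same $\rho$.

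First I would record that the dual chain $(\dual{M}_{n})_{n\ge 0}$ is itself a positive recurrent Markov chain on $\cS$ (indeed $\dual{P}$ shares the stationary distribution $\pi$, being the time-reversal of $P$), and that $(\dual{M}_{n},\dual{S}_{n})_{n\ge 0}$ is again a nontrivial MRW; this places the dual process within the hypotheses of Theorem \ref{thm:MRW arcsine}. Next, invoking the distributional identity for the embedded walks, I would conclude that $(M_{n},S_{n})_{n\ge 0}$ satisfies (b) with parameter $\rho$ precisely when its dual does. At this point the full force of Theorem \ref{thm:MRW arcsine} takes over: since (a), (b) and (c) are equivalent for \emph{any} nontrivial MRW with positive recurrent driving chain, validity of (b) for the dual immediately yields validity of (a) and (c) for the dual with the same $\rho$.

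The remaining point concerns Assertion (d) and its accompanying moment hypothesis $\Erw_{i}\tau(i)^{2}<\infty$ in the range $0<\rho<1$. Here I would note that the return-time distribution of the dual chain to a state $i$ coincides in law with that of the original chain, because a cycle of $(\dual{M}_{n})_{n\ge 0}$ from $i$ back to $i$ is the time-reversal of a cycle of $(M_{n})_{n\ge 0}$, so $\dual{\tau}(i)\eqdist\tau(i)$ under $\Prob_{i}$ and in particular $\Erw_{i}\dual{\tau}(i)^{2}=\Erw_{i}\tau(i)^{2}$. Thus the moment condition transfers verbatim to the dual, and the equivalence of (d) with (a)--(c) for the dual follows from the same theorem. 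This completes the argument.

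I do not anticipate a genuine obstacle, as the corollary is engineered to be immediate once the embedded-walk identity is in hand; the only subtlety worth flagging is the verification that the moment hypothesis is preserved under dualization, which rests on the time-reversal symmetry of the excursion cycles. Everything else reduces to quoting the equivalences already proved in Theorem \ref{thm:MRW arcsine}.
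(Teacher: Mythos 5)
Your argument is correct and follows essentially the same route as the paper, which likewise transfers Assertion (b) via the distributional identity of the embedded walks $(S_{\tau_{n}(i)})_{n\ge 0}$ and $(\dual{S}_{\dual{\tau}_{n}(i)})_{n\ge 0}$ under $\Prob_{i}$ and then declares the corollary immediate from the equivalences in Theorem \ref{thm:MRW arcsine}. Your additional check that $\dual{\tau}(i)\eqdist\tau(i)$ under $\Prob_{i}$, so that the moment hypothesis for (d) carries over, is a worthwhile detail the paper leaves implicit.
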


The further organization is as follows. The equivalence of Theorem \ref{thm:MRW arcsine}(a)--(c) is established in the next section, while Section \ref{sec:Assertion (d)} deals with a proof of the strong Spitzer condition (d) if (a)--(c) hold. As a crucial ingredient, for the case $0<\rho<1$, we will there derive an extension of a Spitzer formula which may be of independent interest, see Proposition \ref{prop:Spitzer-type lemma}.

\section{Proof of Theorem \ref{thm:MRW arcsine}(a)--(c)}

The proof of Theorem \ref{thm:MRW arcsine}(a)--(c) (in fact, their equivalence) will be furnished by a number of auxiliary lemmata the first of which is cited from \cite[Lemma 9.2]{AlsBuck:17a} and particularly shows that any nontrivial ordinary random walk $(S_{n})_{n\ge 0}$ converges to infinity in probability, a fact used in various places below.

\begin{Lemma}\label{lem:MRW concentration}
Let $(M_{n},S_{n})_{n\ge 0}$ be a nontrivial MRW having positive recurrent driving chain $M=(M_{n})_{n\ge 0}$ with stationary distribution $\pi$. Then $|S_{n}|\xrightarrow{\Prob_{\pi}}\infty$, i.e.
\begin{equation}\label{eq:MRW concentration}
\lim_{n\to\infty}\Prob_{\pi}(|S_{n}|\le x)\ =\ 0
\end{equation}
for all $x>0$.
\end{Lemma}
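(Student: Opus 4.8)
The plan is to reduce the assertion for the full walk to a concentration estimate for the embedded ordinary random walk $(S_{\tau_n(i)})_{n\ge 0}$ (an i.i.d.\ sum under $\Prob_i$ for fixed $i\in\cS$), and then to transfer this to arbitrary times $n$ via the regeneration structure furnished by the return epochs $\tau_n(i)$. Throughout I write $Q(Z;h):=\sup_{c\in\R}\Prob(c\le Z\le c+h)$ for the shift-invariant L\'evy concentration function, so that the claim $|S_n|\xrightarrow{\Prob_\pi}\infty$ is exactly $Q$-free: it amounts to $\Prob_\pi(|S_n|\le x)\to 0$ for every $x>0$.

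First I would dispose of nonzero stationary drift: if $\Erw_\pi X_1\ne 0$, then by the strong law for the MRW one has $S_n/n\to\Erw_\pi X_1$ a.s., whence $|S_n|\to\infty$ $\Prob_\pi$-a.s.\ and there is nothing to prove. So assume $\Erw_\pi X_1=0$. Fixing $i$, the block increments $Y_k:=S_{\tau_k(i)}-S_{\tau_{k-1}(i)}$ are i.i.d.\ under $\Prob_i$ with $\Erw_i Y_1=\Erw_\pi X_1\,\Erw_i\tau(i)=0$, and I claim $Y_1$ is nondegenerate: if $Y_1$ were a.s.\ constant it would be a.s.\ $0$ by mean zero, and $S_{\tau(i)}=0$ $\Prob_i$-a.s.\ forces the MRW to be null-homologous (the degenerate, span-zero case), contradicting nontriviality. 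Hence $(S_{\tau_n(i)})_{n\ge 0}$ is a genuine one-dimensional random walk, for which the classical fact $Q(S_{\tau_m(i)};h)\to 0$ as $m\to\infty$ (each fixed $h$) yields $|S_{\tau_m(i)}|\xrightarrow{\Prob_i}\infty$.

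It remains to pass from the regeneration epochs to arbitrary $n$. Let $\nu_n:=\sup\{k\ge 0:\tau_k(i)\le n\}$ with $\tau_0(i):=0$, so that $S_n=S_{\tau_{\nu_n}(i)}+R_n$, where $R_n:=S_n-S_{\tau_{\nu_n}(i)}$ is the partial increment accumulated in the current, incomplete cycle. Positive recurrence gives $\nu_n\to\infty$ a.s., so the number of i.i.d.\ blocks summed in $S_{\tau_{\nu_n}(i)}$ diverges. The natural strategy is to condition on the completed-cycle data together with the current age $n-\tau_{\nu_n}(i)$, under which $R_n$ becomes independent of the completed sum, and then to bound the conditional probability of $\{|S_n|\le x\}$ by a conditional concentration function of a sum of (conditionally) independent blocks through the Kolmogorov--Rogozin inequality, letting $\nu_n\to\infty$ drive it to $0$.

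The hard part is precisely this last step: the event fixing the age $n-\tau_{\nu_n}(i)$ couples the completed cycle lengths $L_1,\dots,L_{\nu_n}$ with the corresponding increments $Y_1,\dots,Y_{\nu_n}$, so one must ensure that conditioning on the lengths does not annihilate the spread of $\sum_k Y_k$. I would handle this by verifying that a nonvanishing proportion of blocks retain a nondegenerate conditional increment law --- equivalently, that the spread cannot collapse without reintroducing the already-excluded null-homologous or nonzero-drift structure --- so that the relevant sum of the terms $1-Q(\,\cdot\,;h)$ still diverges and Kolmogorov--Rogozin forces the conditional concentration function, and thereby $\Prob_\pi(|S_n|\le x)$, to $0$.
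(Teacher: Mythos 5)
The paper itself does not prove this lemma but imports it from \cite[Lemma 9.2]{AlsBuck:17a}, so your argument has to stand on its own; as written it does not. A first, repairable issue is the opening dichotomy on the stationary drift: the lemma imposes no integrability whatsoever, so $\Erw_{\pi}X_{1}$ need not exist and you cannot split into ``$\Erw_{\pi}X_{1}\ne 0$'' versus ``$\Erw_{\pi}X_{1}=0$'', nor deduce nondegeneracy of $Y_{1}=S_{\tau(i)}$ from a zero-mean argument. (Nondegeneracy in the sense needed — $\Prob_{i}(S_{\tau(i)}\ne 0)>0$ — follows directly from nontriviality, but if $S_{\tau(i)}$ is a.s.\ a nonzero constant the shift-invariant concentration function $Q(S_{\tau_{m}(i)};h)$ stays equal to $1$, so that case must be handled separately anyway.)

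The serious gap is the final transfer step, which you correctly identify as the hard part but then resolve by a claim that is false. Consider $X_{n}=g(M_{n})$ with $g(j)=c\bigl(1-\mu\,\1_{\{j=i\}}\bigr)$, $\mu:=\Erw_{i}\tau(i)$, $c\ne 0$, for any driving chain with non-degenerate return time $\tau(i)$. Then $S_{\tau(i)}=c\,(\tau(i)-\mu)$, so the MRW is nontrivial with $\Erw_{\pi}X_{1}=c(1-\mu\pi_{i})=0$; yet each block sum is a deterministic function of its block length, so conditionally on the cycle lengths $L_{1},\dots,L_{\nu_{n}}$ and the age $n-\tau_{\nu_{n}}(i)$ the quantity $S_{\tau_{\nu_{n}}(i)}$ (indeed all of $S_{n}=c\,(n-\mu N_{n}(i))$) is constant. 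No positive proportion of blocks ``retains a nondegenerate conditional increment law'', the conditional concentration function equals $1$, and the Kolmogorov--Rogozin bound yields nothing; so your assertion that the spread can only collapse by reintroducing the null-homologous or nonzero-drift structure is refuted. The conclusion of the lemma still holds in this example, but via an entirely different mechanism — the anti-concentration of the renewal counting process $N_{n}(i)$ itself — which your scheme, having conditioned $\nu_{n}$ and the age away, cannot see. Any correct proof along these lines must combine the spread coming from the conditional laws of the $Y_{k}$ given the $L_{k}$ with the spread coming from the fluctuations of $(\tau_{k}(i))_{k\ge 1}$, and your sketch supplies neither this combination nor a workable substitute.
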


For the subsequent extension of Theorem \ref{thm:MRW arcsine}(a), we put
$$ N_{n}^{>}(x)\ :=\ \sum_{k=1}^{n}\1_{\{S_{k}>x\}}\quad\text{and}\quad N_{n}^{\leqslant}(x)\ :=\ \sum_{k=1}^{n}\1_{\{S_{k}\le x\}} $$
for $n\in\N$ and $x\in\R$.

\begin{Lemma}\label{lem:arcsine extended}
Let $(M_{n},S_{n})_{n\ge 0}$ be a nontrivial MRW with positive recurrent driving chain such that Theorem \ref{thm:MRW arcsine}(a) holds for some $i\in\cS$ and $\rho\in [0,1]$. Then under $\Prob_{i}$, as $n\to\infty$,
\begin{equation}\label{eq:arcsine extended}
\frac{N_{n}^{>}(x)}{n}\ \xrightarrow{d}\ AS(\rho)\quad\text{and}\quad\frac{N_{n}^{\leqslant}(x)}{n}\ \xrightarrow{d}\ AS(1-\rho)
\end{equation}
for all $x\in\R$.
\end{Lemma}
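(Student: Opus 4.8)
The plan is to show that replacing the threshold $0$ by $x$ alters the counting statistic only by a term that is negligible on the scale $n$, so that the distributional limit is inherited directly from the hypothesis of Theorem \ref{thm:MRW arcsine}(a). First I would record the elementary pathwise bound
$$ \bigl| N_{n}^{>}(x) - N_{n}^{>} \bigr|\ \le\ \sum_{k=1}^{n}\1_{\{|S_{k}|\le |x|\}}\ =:\ R_{n}(|x|), $$
valid for every $x\in\R$: if $x\ge 0$, the indicators $\1_{\{S_{k}>x\}}$ and $\1_{\{S_{k}>0\}}$ differ only on $\{0<S_{k}\le x\}$, while if $x<0$ they differ only on $\{x<S_{k}\le 0\}$; in either case the relevant event is contained in $\{|S_{k}|\le |x|\}$. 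The case $x=0$ is trivial, so I assume $|x|>0$ from now on.

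The heart of the argument is to prove $n^{-1}R_{n}(|x|)\xrightarrow{\Prob_{i}}0$, for which it suffices by Markov's inequality to show that $n^{-1}\Erw_{i}R_{n}(|x|)=n^{-1}\sum_{k=1}^{n}\Prob_{i}(|S_{k}|\le |x|)\to 0$. Here Lemma \ref{lem:MRW concentration} enters: since $\Prob_{\pi}(|S_{k}|\le |x|)\to 0$ as $k\to\infty$, Ces\`aro summation yields $n^{-1}\sum_{k=1}^{n}\Prob_{\pi}(|S_{k}|\le |x|)\to 0$. To pass from $\Prob_{\pi}$ to the fixed initial state $i$, I would use $\Prob_{\pi}=\sum_{j}\pi_{j}\Prob_{j}$ together with nonnegativity of the summands to obtain the one-sided bound
$$ \pi_{i}\,\frac{1}{n}\sum_{k=1}^{n}\Prob_{i}(|S_{k}|\le |x|)\ \le\ \frac{1}{n}\sum_{k=1}^{n}\Prob_{\pi}(|S_{k}|\le |x|)\ \longrightarrow\ 0, $$
and since $\pi_{i}>0$ this delivers the desired vanishing of the Ces\`aro average under $\Prob_{i}$.

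With $n^{-1}\bigl|N_{n}^{>}(x)-N_{n}^{>}\bigr|\xrightarrow{\Prob_{i}}0$ established, the first convergence in \eqref{eq:arcsine extended} follows from the assumption $N_{n}^{>}/n\xrightarrow{d}AS(\rho)$ by Slutsky's theorem. The second convergence is then immediate from the identity $N_{n}^{\leqslant}(x)=n-N_{n}^{>}(x)$ and the continuous mapping theorem applied to $u\mapsto 1-u$, using that $U\sim AS(\rho)$ implies $1-U\sim AS(1-\rho)$ by the symmetry of the beta law.

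The only genuinely delicate point is the transfer from $\Prob_{\pi}$ to $\Prob_{i}$: Lemma \ref{lem:MRW concentration} is phrased under the stationary measure, whereas \eqref{eq:arcsine extended} is an assertion under a fixed starting state. The weighting inequality above is what circumvents this, since it avoids having to establish $|S_{n}|\xrightarrow{\Prob_{i}}\infty$ directly and uses nothing beyond positivity of $\pi_{i}$, which is guaranteed by positive recurrence of the driving chain.
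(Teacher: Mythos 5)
Your proof is correct and follows essentially the same route as the paper: the identical sandwich bound $\bigl|N_{n}^{>}(x)-N_{n}^{>}\bigr|\le\sum_{k=1}^{n}\1_{\{|S_{k}|\le |x|\}}$, followed by showing the error term is $o_{\Prob}(n)$ via Markov's inequality and Ces\`aro convergence drawn from Lemma \ref{lem:MRW concentration}. The only difference is that you make explicit the passage from $\Prob_{\pi}$ to $\Prob_{i}$ through the inequality $\pi_{i}\,\Prob_{i}(\cdot)\le\Prob_{\pi}(\cdot)$ (with $\pi_{i}>0$ by positive recurrence), a step the paper's one-line proof leaves implicit; this is a correct and harmless elaboration rather than a genuinely different argument.
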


\begin{proof}
Plainly, it is enough to prove the first assertion. Since $n^{-1}N_{n}^{>}\xrightarrow{d}AS(\rho)$, it suffices to note that \eqref{eq:MRW concentration} of Lemma \ref{lem:MRW concentration} implies
$$ \frac{1}{n}\sum_{k=1}^{n}\1_{\{|S_{k}|\le x\}}\ \xrightarrow{\Prob}\ 0 $$
for all $x\ge 0$ and that
\begin{align*}
\frac{N_{n}^{>}}{n}\ -\ \frac{1}{n}\sum_{k=1}^{n}\1_{\{|S_{k}|\le |x|\}}\ \le\ \frac{N_{n}^{>}(x)}{n}\ \le\ \frac{N_{n}^{>}}{n}\ +\ \frac{1}{n}\sum_{k=1}^{n}\1_{\{|S_{k}|\le |x|\}}
\end{align*}
for all $x\in\R$.\qed
\end{proof}

A generalization of the classical arcsine law for ordinary random walks is next.

\begin{Lemma}\label{lem: RW general arcsine}
Let $(X_{n},Z_{n})_{n\ge 1}$ be a sequence of i.i.d. bivariate random vectors such that $\Prob(X_{1}=0)<1$ and $\Erw Z_{1}=\mu\in (0,\infty)$. Define $S_{0}:=0$ and $S_{n}:=\sum_{k=1}^{n}X_{k}$ for $n\ge 1$. If $(S_{n})_{n\ge 0}$ satisfies Spitzer's condition, i.e.
\begin{equation}\label{RW Spitzer Bedingung}
\rho\ :=\ \lim_{n\to\infty} \frac{1}{n}\sum_{k=1}^{n}\Prob(S_{k}>0),
\end{equation}
exists, then
\begin{equation}\label{eq:RW Allg arcsine}
\frac{1}{\mu n}\sum_{k=1}^{n}Z_{k}\,\1_{\{S_{k-1}>0\}}\ \xrightarrow{d}\ AS(\rho)\quad\text{and}\quad\frac{1}{\mu n}\sum_{k=1}^{n}Z_{k}\,\1_{\{S_{k-1}\le 0\}}\ \xrightarrow{d}\ AS(1-\rho)
\end{equation}
as $n\to\infty$.
\end{Lemma}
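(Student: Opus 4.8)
The plan is to deduce the statement from the classical arcsine law of Spitzer for the unweighted occupation counts $n^{-1}\sum_{k=1}^{n}\mathbf{1}_{\{S_{k}>0\}}$, treating the weights $Z_{k}$ and the index shift $S_{k}\to S_{k-1}$ as negligible perturbations. It is enough to establish the first convergence in \eqref{eq:RW Allg arcsine}. Indeed, since $\Erw Z_{1}=\mu\in(0,\infty)$ forces $\Erw|Z_{1}|<\infty$, the strong law of large numbers gives $(\mu n)^{-1}\sum_{k=1}^{n}Z_{k}\to 1$ $\Prob$-a.s., so once the first convergence is known, Slutsky's theorem yields
$$ \frac{1}{\mu n}\sum_{k=1}^{n}Z_{k}\,\mathbf{1}_{\{S_{k-1}\le 0\}}\ =\ \frac{1}{\mu n}\sum_{k=1}^{n}Z_{k}\ -\ \frac{1}{\mu n}\sum_{k=1}^{n}Z_{k}\,\mathbf{1}_{\{S_{k-1}>0\}}\ \xrightarrow{d}\ 1-AS(\rho)\ =\ AS(1-\rho), $$
the last equality being the reflection symmetry $x\mapsto 1-x$ of the generalized arcsine family.

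First I would record the classical input. As $\Prob(X_{1}=0)<1$, the walk is nondegenerate, and under Spitzer's condition \eqref{RW Spitzer Bedingung} Spitzer's arcsine theorem \cite{Spitzer:56} gives $n^{-1}\sum_{k=1}^{n}\mathbf{1}_{\{S_{k}>0\}}\xrightarrow{d}AS(\rho)$. Because $S_{0}=0$,
$$ \frac{1}{n}\sum_{k=1}^{n}\mathbf{1}_{\{S_{k-1}>0\}}\ =\ \frac{1}{n}\sum_{k=1}^{n}\mathbf{1}_{\{S_{k}>0\}}\ -\ \frac{1}{n}\,\mathbf{1}_{\{S_{n}>0\}}, $$
and the final term is $O(n^{-1})$, so the same limit $AS(\rho)$ holds with $S_{k}$ replaced by $S_{k-1}$.

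The substantive step is the removal of the weights. With $\cF_{k}:=\sigma((X_{1},Z_{1}),\ldots,(X_{k},Z_{k}))$ I would split
$$ \frac{1}{\mu n}\sum_{k=1}^{n}Z_{k}\,\mathbf{1}_{\{S_{k-1}>0\}}\ =\ \frac{1}{n}\sum_{k=1}^{n}\mathbf{1}_{\{S_{k-1}>0\}}\ +\ \frac{1}{\mu n}\sum_{k=1}^{n}(Z_{k}-\mu)\,\mathbf{1}_{\{S_{k-1}>0\}} $$
and show that the remainder tends to $0$ in probability; combined with the previous paragraph and Slutsky's theorem this proves the first convergence. The point is that $\mathbf{1}_{\{S_{k-1}>0\}}$ is $\cF_{k-1}$-measurable whereas $(X_{k},Z_{k})$ is independent of $\cF_{k-1}$ with $\Erw(Z_{k}-\mu)=0$; hence $((Z_{k}-\mu)\mathbf{1}_{\{S_{k-1}>0\}})_{k\ge 1}$ is a martingale-difference sequence for $(\cF_{k})_{k\ge 0}$, and in particular the dependence between $Z_{k}$ and $X_{k}$ is irrelevant. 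If we had $\Erw Z_{1}^{2}<\infty$, orthogonality of the increments would bound the second moment of the remainder by $(\mu^{2}n)^{-1}\Erw(Z_{1}-\mu)^{2}\to 0$ in one line. Under the mere assumption $\Erw|Z_{1}|<\infty$ I would truncate: with $Y_{k}:=Z_{k}-\mu$ and $Y_{k}^{(n)}:=Y_{k}\mathbf{1}_{\{|Y_{k}|\le n\}}$, the discarded part is controlled by $\sum_{k=1}^{n}\Prob(Y_{k}\neq Y_{k}^{(n)})=n\,\Prob(|Y_{1}|>n)\le\Erw[|Y_{1}|\mathbf{1}_{\{|Y_{1}|>n\}}]\to 0$, the recentering by $|\Erw Y_{1}^{(n)}|\le\Erw[|Y_{1}|\mathbf{1}_{\{|Y_{1}|>n\}}]\to 0$, and the centred truncated sum by the orthogonality bound
$$ \Erw\left(\frac{1}{\mu n}\sum_{k=1}^{n}(Y_{k}^{(n)}-\Erw Y_{k}^{(n)})\,\mathbf{1}_{\{S_{k-1}>0\}}\right)^{\!2}\ \le\ \frac{1}{\mu^{2}n}\,\Erw\big[Y_{1}^{2}\mathbf{1}_{\{|Y_{1}|\le n\}}\big]. $$

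The main obstacle is precisely this last truncation estimate, which is the only place where the first-moment assumption $\Erw|Z_{1}|<\infty$ (rather than a second moment) is felt: one needs the elementary Cesàro-type fact that $n^{-1}\Erw[Y_{1}^{2}\mathbf{1}_{\{|Y_{1}|\le n\}}]\to 0$ whenever $\Erw|Y_{1}|<\infty$, after which the bound above tends to $0$ and the three pieces together give $(\mu n)^{-1}\sum_{k=1}^{n}(Z_{k}-\mu)\mathbf{1}_{\{S_{k-1}>0\}}\xrightarrow{\Prob}0$. Everything else is soft: the classical arcsine law supplies the limit, the shift $S_{k}\to S_{k-1}$ costs only a vanishing boundary term, and Slutsky's theorem assembles the pieces and yields the second convergence in \eqref{eq:RW Allg arcsine}.
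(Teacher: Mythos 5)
Your proposal is correct and follows essentially the same route as the paper: the same decomposition of $(\mu n)^{-1}\sum_{k=1}^{n}Z_{k}\1_{\{S_{k-1}>0\}}$ into $\mu$ times the occupation fraction (handled by Spitzer's classical arcsine law) plus a zero-mean martingale remainder, with the complementary indicator treated by subtraction from $(\mu n)^{-1}\sum_{k=1}^{n}Z_{k}$. The only difference is that where the paper cites the weak law for martingale difference arrays from Hall and Heyde (Thm.~2.19) to kill the remainder, you prove that step by hand via truncation at level $n$, orthogonality of the centred truncated increments, and the fact that $n^{-1}\Erw[Y_{1}^{2}\1_{\{|Y_{1}|\le n\}}]\to 0$ under a first moment --- a correct, self-contained substitute for the citation.
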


\begin{proof}
Since $\frac{1}{n}\sum_{k=1}^{n}Z_{k}\,\1_{\{S_{k-1}\le 0\}}=\frac{1}{n}\sum_{k=1}^{n}Z_{k}-\frac{1}{n}\sum_{k=1}^{n}Z_{k}\,\1_{\{S_{k-1}>0\}}$, we see that the two assertions in \eqref{eq:RW Allg arcsine} are equivalent and thus need to prove only the first one. We have
$n^{-1}N_{n-1}^{>}(x)\xrightarrow{d}AS(\rho)$ by the classical arcsine law and
$$ \frac{1}{n}\sum_{k=1}^{n}Z_{k}\,\1_{\{S_{k-1}>0\}}\ =\ \mu\,\frac{N_{n-1}^{>}}{n}\ +\ \frac{1}{n}\sum_{k=1}^{n}(Z_{k}-\mu)\,\1_{\{S_{k-1}>0\}}. $$
Hence it suffices to prove
$$ \frac{1}{n}\sum_{k=1}^{n}(Z_{k}-\mu)\,\1_{\{S_{k-1}>0\}}\ \xrightarrow{\Prob}\ 0 $$
as $n\to\infty$. But this follows directly from \cite[Thm.~2.19]{Hall+Heyde:80} when observing that the sequence $(\sum_{k=1}^{n}(Z_{k}-\mu)\,\1_{\{S_{k-1}>0\}})_{n\ge 0}$ forms a zero-mean martingale and
$$ \Prob(|Z_{k}-\mu|>z,S_{k-1}>0|\cF_{k-1})\ =\ \Prob(|Z_{1}-\mu|>z)\,\1_{\{S_{k-1}>0\}}\ \le\ \Prob(|Z_{1}-\mu|>z)\quad\text{a.s.} $$
for all $k\in\N$ and $z\ge 0$, where $(\cF_{n})_{n\ge 0}$ denotes the canonical filtration associated with $(X_{n},Z_{n})_{n\ge 1}$
\qed \end{proof}

For $n\in\N$ and $i\in\cS$, we put
\begin{align*}
D_{n}^{i}\ &:=\ \max_{\tau_{n-1}(i)<k\le\tau_{n}(i)}(S_{k}-S_{\tau_{n-1}(i)})^{-},\\
H_{n}^{i}\ &:=\ \max_{\tau_{n-1}(i)<k\le\tau_{n}(i)}(S_{k}-S_{\tau_{n-1}(i)})^{+}\\
\shortintertext{and}
\chi_{n}(i)\ &:=\, \tau_{n}(i)-\tau_{n-1}(i),
\end{align*}
where $\tau_{0}(i):=0$. Obviously, the triplets $(D_{n}^{i},H_{n}^{i},\chi_{n}(i))$ for $n\in\N$ are i.i.d. under $\Prob_{i}$.

\begin{Lemma}\label{lem:another arcsine law}
Let $(M_{n},S_{n})_{n\ge 0}$ be a nontrivial MRW with positive recurrent driving chain such that Theorem \ref{thm:MRW arcsine}(b) holds true for some $i\in\cS$ and $\rho\in [0,1]$. Put
$$ L_{n}^{i,>}\ :=\ \frac{1}{n}\sum_{k=1}^{n}\chi_{k}(i)\,\1_{\{0<S_{\tau_{k-1}(i)}\le D_{k}^{i}\}}\quad\text{and}\quad L_{n}^{i,\leqslant}\ :=\ \frac{1}{n}\sum_{k=1}^{n}\chi_{k}(i)\,\1_{\{-H_{k}^{i}<S_{\tau_{k-1}(i)}\le 0\}} $$
for $n\in\N$. Then $L_{n}^{i}:=L_{n}^{i,>}+L_{n}^{i,\leqslant}$ satisfies
\begin{equation}\label{eq:neglibility}
\lim_{n\to\infty}\Erw_{i}L_{n}^{i}\ =\ 0,
\end{equation}
in particular $L_{n}^{i}\xrightarrow{\Prob_{i}}0$. Moreover,
\begin{align}
\frac{\pi_{i}}{n}\sum_{k=1}^{n}\chi_{k}(i)\,\1_{\{S_{\tau_{k-1}(i)}-D_{k}^{i}>0\}}\ \xrightarrow{d}\ AS(\rho)\label{eq1:another arcsine law}
\shortintertext{and}
\frac{\pi_{i}}{n}\sum_{k=1}^{n}\chi_{k}(i)\,\1_{\{S_{\tau_{k-1}(i)}+H_{k}^{i}>0\}}\ \xrightarrow{d}\ AS(\rho)\label{eq2:another arcsine law}
\end{align}
under $\Prob_{i}$, as $n\to\infty$.
\end{Lemma}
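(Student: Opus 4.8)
The plan is to deduce both laws \eqref{eq1:another arcsine law}--\eqref{eq2:another arcsine law} from Lemma~\ref{lem: RW general arcsine} applied to the embedded ordinary random walk $T_k:=S_{\tau_k(i)}$, $k\ge0$, after using the negligibility \eqref{eq:neglibility} to trade the excursion-dependent indicators for the plain indicator $\1_{\{S_{\tau_{k-1}(i)}>0\}}$. Write $Y_k:=S_{\tau_{k-1}(i)}=T_{k-1}$ and recall that, under $\Prob_i$, the cycle triplets $(\chi_k(i),D_k^i,H_k^i)$, $k\in\N$, are i.i.d.\ and that $(\chi_k(i),D_k^i,H_k^i)$ is independent of $Y_k$, the latter being a function of the first $k-1$ cycles. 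Since $D_k^i,H_k^i\ge0$, elementary set algebra gives $\1_{\{Y_k>0\}}-\1_{\{Y_k-D_k^i>0\}}=\1_{\{0<Y_k\le D_k^i\}}$ and $\1_{\{Y_k+H_k^i>0\}}-\1_{\{Y_k>0\}}=\1_{\{-H_k^i<Y_k\le0\}}$, whose sum is precisely the summand of $L_n^i=L_n^{i,>}+L_n^{i,\le}$. Hence the two averages in \eqref{eq1:another arcsine law} and \eqref{eq2:another arcsine law} equal $A_n:=\frac{\pi_i}{n}\sum_{k=1}^n\chi_k(i)\,\1_{\{Y_k>0\}}$ minus $\pi_iL_n^{i,>}$, respectively plus $\pi_iL_n^{i,\le}$.

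First I would prove \eqref{eq:neglibility}. Using the independence above and $(\chi_k(i),D_k^i,H_k^i)\eqdist(\chi_1(i),D_1^i,H_1^i)$,
\[
\Erw_i L_n^i\ =\ \frac1n\sum_{k=1}^n\Erw_i\,\phi(Y_k),\qquad \phi(y):=\Erw_i\big[\chi_1(i)\,\1_{\{-H_1^i<y\le D_1^i\}}\big].
\]
The function $\phi$ is bounded by $\Erw_i\chi_1(i)=\Erw_i\tau(i)=\pi_i^{-1}<\infty$, and $\phi(y)\to0$ as $|y|\to\infty$: for $y\to+\infty$ dominate the indicator by $\1_{\{D_1^i\ge y\}}$ and for $y\to-\infty$ by $\1_{\{H_1^i>-y\}}$, then apply dominated convergence, the majorant $\chi_1(i)$ being integrable and $D_1^i,H_1^i$ being $\Prob_i$-a.s.\ finite (the excursion runs over the a.s.\ finite cycle). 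Since the MRW is nontrivial, each embedded walk $(T_k)_{k\ge0}$ is a nontrivial ordinary random walk under $\Prob_i$, so $|Y_k|=|T_{k-1}|\xrightarrow{\Prob_i}\infty$ by the fact recalled before Lemma~\ref{lem:MRW concentration}; together with the decay of $\phi$ this yields $\phi(Y_k)\xrightarrow{\Prob_i}0$, and boundedness gives $\Erw_i\phi(Y_k)\to0$. A Ces\`aro step then produces $\Erw_i L_n^i\to0$, and $L_n^i\ge0$ turns this into $L_n^i\xrightarrow{\Prob_i}0$ by Markov's inequality.

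It remains to show $A_n\xrightarrow{d}AS(\rho)$, after which \eqref{eq1:another arcsine law}--\eqref{eq2:another arcsine law} follow by Slutsky's theorem, since $\pi_iL_n^{i,>}$ and $\pi_iL_n^{i,\le}$ are both dominated by $\pi_iL_n^i\xrightarrow{\Prob_i}0$. To obtain the limit for $A_n$ I apply Lemma~\ref{lem: RW general arcsine} to the i.i.d.\ pairs $(X_k',Z_k):=(T_k-T_{k-1},\chi_k(i))$ under $\Prob_i$, whose partial sums are the $T_k=S_{\tau_k(i)}$: the condition $\Prob_i(X_1'=0)<1$ is the nontriviality of the embedded walk, $\mu:=\Erw_i\chi_1(i)=\pi_i^{-1}\in(0,\infty)$ by positive recurrence, and Spitzer's condition for $(T_k)$ with parameter $\rho$ is exactly Theorem~\ref{thm:MRW arcsine}(b). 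The lemma then gives $\frac{1}{\mu n}\sum_{k=1}^n\chi_k(i)\1_{\{T_{k-1}>0\}}=A_n\xrightarrow{d}AS(\rho)$, as required.

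The crux is the negligibility step. The indicators entangle the level $Y_k$ reached after $k-1$ cycles with the depth $D_k^i$ and height $H_k^i$ of the $k$-th excursion, and the main work is to separate the two via their independence and then to show that a single excursion can bridge level $0$ from a far-off starting level only on an event of vanishing $\chi$-weight. Quantifying this through $\phi(y)\to0$ and combining it with the escape $|Y_k|\to\infty$ of the embedded walk is what makes the overlap region $\{-H_k^i<Y_k\le D_k^i\}$ asymptotically $\chi$-negligible; everything else is a routine reduction to Lemma~\ref{lem: RW general arcsine}.
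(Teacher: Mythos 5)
Your proposal is correct and follows essentially the same route as the paper: the same algebraic decomposition into $W_n\mp\pi_iL_n^{i,\cdot}$, negligibility of $L_n^i$ via the cycle-independence and the escape $|S_{\tau_{k-1}(i)}|\xrightarrow{\Prob_i}\infty$ of the embedded walk, and an application of Lemma~\ref{lem: RW general arcsine} to $(S_{\tau_k(i)}-S_{\tau_{k-1}(i)},\chi_k(i))$ followed by Slutsky. The only cosmetic difference is that you integrate out the cycle triplet first (your function $\phi$) where the paper conditions on it and applies dominated convergence to the Ces\`aro averages; the two computations are interchangeable.
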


\begin{proof}
As noted before Lemma \ref{lem:MRW concentration}, $S_{\tau_{n}(i)}\xrightarrow{\Prob_{i}}\infty$ and therefore
$$ \lim_{n\to\infty}\frac{1}{n}\sum_{k=1}^{n}\Prob(-y<S_{\tau_{k-1}(i)}\le x)\ =\ 0 $$
for all $x,y>0$. With the help of the dominated convergence theorem, this implies
\begin{align*}
&\lim_{n\to\infty}\Erw_{i}L_{n}^{i}\ =\ \lim_{n\to\infty}\frac{1}{n}\,\Erw_{i}\left(\sum_{k=1}^{n}\chi_{k}(i)\,\1_{\{-H_{k}^{i}<S_{\tau_{k-1}(i)}\le D_{k}^{i}\}}\right)\\
&\quad=\ \lim_{n\to\infty}\sum_{l\ge 1}\int\frac{1}{n}\sum_{k=1}^{n}\Prob(-y<S_{\tau_{k-1}(i)}\le x)\ l\,\Prob_{i}(\tau(i)=l,D_{1}^{i}\in dx,H_{1}^{i}\in dy)\ =\ 0,
\end{align*}
i.e. \eqref{eq:neglibility}. Since $\pi_{i}=(\Erw_{i}\tau(i))^{-1}$, we have by Lemma \ref{lem: RW general arcsine}, when applied to the sequence $(S_{\tau_{n}(i)}-S_{\tau_{n-1}(i)},\chi_{n}(i))_{n\ge 1}$, that
\begin{equation}\label{eq:arcsine law W_n}
W_{n}\ :=\ \frac{\pi_{i}}{n}\sum_{k=1}^{n}\chi_{k}(i)\,\1_{\{S_{\tau_{k-1}(i)}>0\}}\ \xrightarrow{d}\ AS(\rho)
\end{equation}
under $\Prob_{i}$, as $n\to\infty$. Observing that
\begin{align*}
\frac{\pi_{i}}{n}\sum_{k=1}^{n}\chi_{k}(i)\,\1_{\{S_{\tau_{k-1}(i)}-D_{k}^{i}>0\}}\ &=\ W_{n}-\pi_{i}\,L_{n}^{i,>}
\shortintertext{and}
\frac{\pi_{i}}{n}\sum_{k=1}^{n}\chi_{k}(i)\,\1_{\{S_{\tau_{k-1}(i)}+H_{k}^{i}>0\}}\ &=\ W_{n}+\pi_{i}\,L_{n}^{i,\leqslant}.
\end{align*}
the assertions \eqref{eq1:another arcsine law} and \eqref{eq2:another arcsine law} follow when combining $L_{n}^{i}\xrightarrow{\Prob_{i}}0$ with \eqref{eq:arcsine law W_n}.\qed
\end{proof}

\begin{Lemma}\label{lem:stopped arcsine law}
Let $(M_{n},S_{n})_{n\ge 0}$ be a nontrivial MRW with positive recurrent driving chain such that Theorem \ref{thm:MRW arcsine}(b) holds true for some $i\in\cS$ and $\rho\in [0,1]$. Then
\begin{equation}\label{eq:stopped arcsine law}
\frac{N_{\tau_{n}(i)}^{>}}{\tau_{n}(i)}\ \xrightarrow{d}\ AS(\rho)
\end{equation}
under $\Prob_{i}$, as $n\to\infty$. Moreover, the same holds true when replacing $\tau_{n}(i)$ with $\tau_{\Lambda(n)}(i)$ or $\tau_{\Lambda(n)+1}(i)$, where $\Lambda(n):=\sup\{k\ge 0:\tau_{k}(i)\le n\}$ for $n\in\N$.
\end{Lemma}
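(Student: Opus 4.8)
The plan is to reduce \eqref{eq:stopped arcsine law} to the deterministic-index convergences of Lemma \ref{lem:another arcsine law} by a pathwise sandwich over return cycles, and then to pass from the deterministic index $n$ to the random indices $\Lambda(n)$ and $\Lambda(n)+1$ by a monotonicity argument. The main obstacle is this last, random-index, transfer, since convergence in distribution is not in general preserved under a random time change.

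First I would record the cycle-wise bound. Fix $k$ and look at the block $\tau_{k-1}(i)<j\le\tau_k(i)$. Since $-D_k^i\le S_j-S_{\tau_{k-1}(i)}\le H_k^i$ throughout the block, every $S_j$ in it is positive as soon as $S_{\tau_{k-1}(i)}-D_k^i>0$ and nonpositive as soon as $S_{\tau_{k-1}(i)}+H_k^i\le 0$, while in the intermediate case the number of positive $S_j$ trivially lies between $0$ and $\chi_k(i)$. Summing these per-cycle bounds over $k=1,\dots,n$ gives the pathwise sandwich
\[
\sum_{k=1}^n\chi_k(i)\,\1_{\{S_{\tau_{k-1}(i)}-D_k^i>0\}}\ \le\ N_{\tau_n(i)}^{>}\ \le\ \sum_{k=1}^n\chi_k(i)\,\1_{\{S_{\tau_{k-1}(i)}+H_k^i>0\}}.
\]

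Dividing through by $\tau_n(i)=\sum_{k=1}^n\chi_k(i)$ and writing $N_{\tau_n(i)}^{>}/\tau_n(i)=(\pi_i N_{\tau_n(i)}^{>}/n)\cdot(n/\pi_i\tau_n(i))$, I would note that, after multiplication by $\pi_i/n$, the lower and upper bounds are precisely the left-hand sides of \eqref{eq1:another arcsine law} and \eqref{eq2:another arcsine law}, hence both converge in distribution to $AS(\rho)$, while their difference equals $\pi_i L_n^i\xrightarrow{\Prob_i}0$ by \eqref{eq:neglibility}. A squeeze together with Slutsky's lemma then yields $\pi_i N_{\tau_n(i)}^{>}/n\xrightarrow{d}AS(\rho)$; since $n/\pi_i\tau_n(i)\to1$ $\Prob_i$-a.s. by the strong law applied to the i.i.d.\ cycle lengths $\chi_k(i)$ (of mean $\Erw_i\tau(i)=\pi_i^{-1}$), another appeal to Slutsky gives \eqref{eq:stopped arcsine law} along the deterministic index.

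For the random index I would use that both $N_{\tau_m(i)}^{>}$ and $\tau_m(i)$ are nondecreasing in $m$, so that for $m^-\le m\le m^+$
\[
\frac{N_{\tau_{m^-}(i)}^{>}}{\tau_{m^-}(i)}\cdot\frac{\tau_{m^-}(i)}{\tau_{m^+}(i)}\ \le\ \frac{N_{\tau_m(i)}^{>}}{\tau_m(i)}\ \le\ \frac{N_{\tau_{m^+}(i)}^{>}}{\tau_{m^+}(i)}\cdot\frac{\tau_{m^+}(i)}{\tau_{m^-}(i)}.
\]
Fix $\delta\in(0,\pi_i)$ and set $m_n^{\pm}:=\lfloor(\pi_i\pm\delta)n\rfloor$. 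Because $\Lambda(n)/n\to\pi_i$ $\Prob_i$-a.s. (elementary renewal theorem), the event $E_n:=\{m_n^-\le\Lambda(n)\le m_n^+\}$ satisfies $\Prob_i(E_n)\to1$, and on $E_n$ the previous display applies with $m=\Lambda(n)$. By the deterministic case already established, $N_{\tau_{m_n^{\pm}}(i)}^{>}/\tau_{m_n^{\pm}}(i)\xrightarrow{d}AS(\rho)$ along these deterministic sequences, whereas the strong law for $\tau_m(i)$ gives $\tau_{m_n^-}(i)/\tau_{m_n^+}(i)\to(\pi_i-\delta)/(\pi_i+\delta)$ and $\tau_{m_n^+}(i)/\tau_{m_n^-}(i)\to(\pi_i+\delta)/(\pi_i-\delta)$ $\Prob_i$-a.s. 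Comparing distribution functions at a continuity point $x$ and using $\Prob_i(E_n^c)\to0$, I would thus bound $\limsup_n$ and $\liminf_n$ of $\Prob_i(N_{\tau_{\Lambda(n)}(i)}^{>}/\tau_{\Lambda(n)}(i)\le x)$ from above by $\Prob(AS(\rho)\le\frac{\pi_i+\delta}{\pi_i-\delta}x)$ and from below by $\Prob(AS(\rho)\le\frac{\pi_i-\delta}{\pi_i+\delta}x)$, respectively. Letting $\delta\downarrow0$ collapses both bounds to $\Prob(AS(\rho)\le x)$ at every continuity point, proving \eqref{eq:stopped arcsine law} for $\tau_{\Lambda(n)}(i)$. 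The verbatim argument applies to $\Lambda(n)+1$, since $(\Lambda(n)+1)/n\to\pi_i$ as well. As anticipated, this random-index step is the delicate one; the monotonicity of $N_{\tau_m(i)}^{>}$ and $\tau_m(i)$ together with $\Lambda(n)/n\to\pi_i$ is exactly what replaces an Anscombe-type uniform-continuity condition here.
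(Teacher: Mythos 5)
Your proof is correct. The first half --- the per-cycle sandwich $\sum_k\chi_k(i)\1_{\{S_{\tau_{k-1}(i)}-D_k^i>0\}}\le N_{\tau_n(i)}^{>}\le\sum_k\chi_k(i)\1_{\{S_{\tau_{k-1}(i)}+H_k^i>0\}}$, the squeeze via \eqref{eq:neglibility}, \eqref{eq1:another arcsine law}, \eqref{eq2:another arcsine law}, and the Slutsky step using $n^{-1}\tau_n(i)\to\pi_i^{-1}$ --- is exactly the paper's argument. Where you genuinely diverge is the transfer to the random indices $\Lambda(n)$ and $\Lambda(n)+1$: the paper invokes Anscombe's theorem, using $\Lambda(n)/n\to\pi_i$ $\Prob_i$-a.s.\ together with a uniform-continuity-in-probability condition for the family $N_{\tau_m(i)}^{>}/\tau_m(i)$ that it leaves to the reader ``as one can readily check''; you instead exploit the monotonicity of $m\mapsto N_{\tau_m(i)}^{>}$ and $m\mapsto\tau_m(i)$ to trap the random-index quantity between $\frac{N^{>}_{\tau_{m_n^-}(i)}}{\tau_{m_n^+}(i)}$ and $\frac{N^{>}_{\tau_{m_n^+}(i)}}{\tau_{m_n^-}(i)}$ for deterministic $m_n^\pm=\lfloor(\pi_i\pm\delta)n\rfloor$ on an event of probability tending to one, and then let $\delta\downarrow0$ after comparing distribution functions. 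Your route is more self-contained and effectively proves the relevant Anscombe-type control by hand, at the cost of a slightly longer $\delta$-bookkeeping; the paper's route is shorter on the page but defers the nontrivial verification of the uniform-continuity condition. One small point worth making explicit in your write-up: when passing $\delta\downarrow0$ in the bounds $\Prob(AS(\rho)\le\frac{\pi_i+\delta}{\pi_i-\delta}x)$ and $\Prob(AS(\rho)<\frac{\pi_i-\delta}{\pi_i+\delta}x)$, you should either restrict to continuity points $x$ of $AS(\rho)$ (which suffices, and is what you implicitly do) or use the one-sided portmanteau inequalities for closed and open half-lines, since for $\rho\in\{0,1\}$ the limit law is degenerate.
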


\begin{proof}
We first point out that
\begin{align*}
\{S_{\tau_{n-1}(i)}-D_{n}^{i}>0\}\ \subset\ \{S_{\tau_{n-1}(i)+k}>0\}\ \subset\ \{S_{\tau_{n-1}(i)}+H_{n}^{i}>0\}
\end{align*}
for all $n\in\N$ and $k=1,\ldots,\chi_{n}(i)$, hence
\begin{align}\label{eq:crucial inequality}
\frac{1}{n}\sum_{k=1}^{n}\chi_{k}(i)\,\1_{\{S_{\tau_{k-1}(i)}-D_{n}^{i}>0\}}\ \le\ \frac{N_{\tau_{n}(i)}^{>}}{n}\ \le\ \frac{1}{n}\sum_{k=1}^{n}\chi_{k}(i)\,\1_{\{S_{\tau_{k-1}(i)}+H_{k}^{i}>0\}}
\end{align}
for all $n\in\N$. Now use \eqref{eq:neglibility} in Lemma \ref{lem:another arcsine law} to infer that the difference of the upper and lower bound converges to 0 in $\Prob_{i}$-probability. Moreover, these bounds have the same asymptotic law by \eqref{eq1:another arcsine law} and \eqref{eq2:another arcsine law}, giving $\frac{\pi_{i}}{n}N_{\tau_{n}(i)}^{>}\xrightarrow{d}AS(\rho)$ under $\Prob_{i}$. Since $n^{-1}\tau_{n}(i)\to\pi_{i}^{-1}$ $\Prob_{i}$-a.s. by the strong law of large numbers, Slutsky's theorem implies \eqref{eq:stopped arcsine law}.

\vspace{.1cm}
Replacing $\tau_{n}(i)$ with $\tau_{\Lambda(n)}(i)$ or $\tau_{\Lambda(n)+1}(i)$, the same result is obtained by an appeal to Anscombe's theorem \cite[p.~16]{Gut:09} because
\begin{align*}
&\frac{\tau_{\Lambda(n)}}{n}\ \xrightarrow{n\to\infty}\ 1\quad\Prob_{i}\text{-a.s. entails}\quad\frac{\Lambda(n)}{n}\ =\ \frac{\Lambda(n)}{\tau_{\Lambda(n)}}\cdot\frac{\tau_{\Lambda(n)}}{n}\ \xrightarrow{n\to\infty}\ \frac{1}{\Erw_{i}\tau(i)}\ =\ \pi_{i}\quad\Prob_{i}\text{-a.s.}
\shortintertext{and}
&\forall\,\eps,\eta>0:\ \exists\,\delta>0,\,n_{0}\in\N:\ \forall\,n\ge n_{0}:\ \Prob_{i}\left(\max_{m:|m-n|<n\delta}\left|\frac{N_{\tau_{m}(i)}^{>}}{\tau_{m}(i)}-\frac{N_{\tau_{n}(i)}^{>}}{\tau_{n}(i)}\right|>\eps\right)\ <\ \eta
\end{align*}
as one can readily check.\qed
\end{proof}

\begin{Lemma}\label{lem:(c) implies (b)}
Let $(M_{n},S_{n})_{n\ge 0}$ be a nontrivial MRW with positive recurrent driving chain such that Theorem \ref{thm:MRW arcsine}(c) holds true for some $i\in\cS$ and $\rho\in [0,1]$. Then
Theorem \ref{thm:MRW arcsine}(b) for the same $i$ and $\rho$ is also valid.
\end{Lemma}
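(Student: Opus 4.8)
The plan is to establish the equivalent statement that the embedded Ces\`aro average $n^{-1}\sum_{k=1}^{n}\Prob_{i}(S_{\tau_{k}(i)}>0)$ tends to $\rho$, and to carry this out in two stages. First I would reduce this embedded Spitzer sum to the expected occupation count $\tfrac{\pi_{i}}{n}\Erw_{i}N_{\tau_{n}(i)}^{>}$ of positive sums up to the \emph{random} renewal time $\tau_{n}(i)$; second, I would transfer hypothesis (c), which controls $m^{-1}\Erw_{i}N_{m}^{>}=m^{-1}\sum_{k=1}^{m}\Prob_{i}(S_{k}>0)$ along \emph{deterministic} times, to the random time $\tau_{n}(i)\sim n/\pi_{i}$.

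For the first stage I would take $\Prob_{i}$-expectations in the pathwise sandwich \eqref{eq:crucial inequality} (all terms being integrable since $N_{\tau_{n}(i)}^{>}\le\tau_{n}(i)$ has mean $n/\pi_{i}$). As cycle $k$, and in particular the triplet $(D_{k}^{i},H_{k}^{i},\chi_{k}(i))$, is independent of $S_{\tau_{k-1}(i)}$ and $\Erw_{i}\chi_{k}(i)=\Erw_{i}\tau(i)=\pi_{i}^{-1}$, one has
\[
\frac{\pi_{i}}{n}\,\Erw_{i}\sum_{k=1}^{n}\chi_{k}(i)\,\1_{\{S_{\tau_{k-1}(i)}>0\}}\ =\ \frac{1}{n}\sum_{k=1}^{n}\Prob_{i}(S_{\tau_{k-1}(i)}>0),
\]
and, because $D_{k}^{i},H_{k}^{i}\ge 0$, the lower and upper bounds in \eqref{eq:crucial inequality} differ from this quantity by exactly $\pi_{i}\Erw_{i}L_{n}^{i,>}$ and $\pi_{i}\Erw_{i}L_{n}^{i,\leqslant}$, respectively. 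The negligibility \eqref{eq:neglibility} forces both corrections to vanish, so that
\[
\frac{\pi_{i}}{n}\,\Erw_{i}N_{\tau_{n}(i)}^{>}\ =\ \frac{1}{n}\sum_{k=1}^{n}\Prob_{i}(S_{\tau_{k-1}(i)}>0)+o(1).
\]
Since $\Prob_{i}(S_{\tau_{0}(i)}>0)=0$, the right-hand average agrees asymptotically with the (b)-average, so it remains only to prove $\tfrac{\pi_{i}}{n}\Erw_{i}N_{\tau_{n}(i)}^{>}\to\rho$. I would stress that \eqref{eq:neglibility} is legitimately available here although Lemma \ref{lem:another arcsine law} nominally presupposes (b): its proof uses solely $|S_{\tau_{n}(i)}|\xrightarrow{\Prob_{i}}\infty$ from Lemma \ref{lem:MRW concentration}, not assertion (b).

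For the second stage, fix $\delta>0$. Exploiting that $m\mapsto N_{m}^{>}$ is nondecreasing with $N_{m}^{>}\le m$, I would sandwich $\Erw_{i}N_{\tau_{n}(i)}^{>}$ by splitting according to whether $\tau_{n}(i)$ lies in $[(1-\delta)n/\pi_{i},(1+\delta)n/\pi_{i}]$: on this event one replaces $\tau_{n}(i)$ by the deterministic time $\lfloor(1\pm\delta)n/\pi_{i}\rfloor$ and applies (c) in the form $m^{-1}\Erw_{i}N_{m}^{>}\to\rho$ together with $\lfloor(1\pm\delta)n/\pi_{i}\rfloor/n\to(1\pm\delta)/\pi_{i}$; on the complementary event one bounds $N_{\tau_{n}(i)}^{>}\le\tau_{n}(i)$. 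The decisive input is that $\tau_{n}(i)/n\to\pi_{i}^{-1}$ holds in $L^{1}$ (law of large numbers for the i.i.d.\ $\chi_{k}(i)$ with finite mean $\pi_{i}^{-1}$), whence $\{\tau_{n}(i)/n\}_{n\ge1}$ is uniformly integrable; as $\Prob_{i}(|\tau_{n}(i)/n-\pi_{i}^{-1}|>\delta/\pi_{i})\to0$, the complementary contribution $\pi_{i}\Erw_{i}\big[(\tau_{n}(i)/n)\,\1_{\{|\tau_{n}(i)/n-\pi_{i}^{-1}|>\delta/\pi_{i}\}}\big]$ tends to $0$. This yields $\rho(1-\delta)\le\liminf\le\limsup\le\rho(1+\delta)$, and letting $\delta\downarrow0$ gives $\tfrac{\pi_{i}}{n}\Erw_{i}N_{\tau_{n}(i)}^{>}\to\rho$, proving (b).

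I expect the genuine difficulty to lie in the second stage: because $N_{m}^{>}$ and the event $\{\tau_{n}(i)=m\}$ are correlated, one cannot merely substitute $m=\tau_{n}(i)$ into the hypothesis $m^{-1}\Erw_{i}N_{m}^{>}\to\rho$, and it is precisely the combination of monotonicity of $N_{\cdot}^{>}$ with the uniform integrability of $\tau_{n}(i)/n$ that licenses the passage from deterministic to random times. The only other point needing care is that the boundary corrections $\Erw_{i}L_{n}^{i,>},\Erw_{i}L_{n}^{i,\leqslant}$ are negligible, and this is supplied by \eqref{eq:neglibility}.\qed
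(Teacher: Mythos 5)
Your plan is correct, and it shares the paper's skeleton -- the pathwise sandwich \eqref{eq:crucial inequality} together with the negligibility \eqref{eq:neglibility} of the boundary terms $L_{n}^{i,>},L_{n}^{i,\leqslant}$ (and your observation that \eqref{eq:neglibility} needs only $|S_{\tau_{n}(i)}|\xrightarrow{\Prob_{i}}\infty$ from Lemma \ref{lem:MRW concentration}, not assertion (b), is exactly right and is implicitly used by the paper as well). Where you genuinely diverge is in how the random-versus-deterministic time issue is resolved. The paper fixes the deterministic time $n$ and works with the random number of completed cycles $\Lambda(n)=\sup\{k:\tau_{k}(i)\le n\}$; this forces it to prove the Wald-type identity \eqref{eq:aux4} for the stopping time $\nu=\Lambda(n)+1$ and then to replace $\Lambda(n)$ by the deterministic counts $n_{\eps},n^{\eps}$ via \eqref{eq:aux3}, yielding Ces\`aro averages over $\lceil(1\mp\eps)\pi_{i}n\rceil$ terms and the $\eps$-bookkeeping at the end. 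You instead fix the number of cycles $n$ and work with the random time $\tau_{n}(i)$: the Wald step then reduces to the trivial factorization $\Erw_{i}[\chi_{k}(i)\1_{\{S_{\tau_{k-1}(i)}>0\}}]=\pi_{i}^{-1}\Prob_{i}(S_{\tau_{k-1}(i)}>0)$ by independence of the $k$-th cycle from the past, and the burden shifts to transporting hypothesis (c) from deterministic times to $\tau_{n}(i)$, which you do via monotonicity of $m\mapsto N_{m}^{>}$, the crude bound $N_{m}^{>}\le m$, and uniform integrability of $\tau_{n}(i)/n$ (guaranteed by the $L^{1}$ law of large numbers for the i.i.d.\ $\chi_{k}(i)$). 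Both routes are sound and of comparable length; yours trades optional stopping for uniform integrability and arguably isolates more cleanly the single analytic fact that makes the deterministic-to-random passage legitimate, while the paper's version keeps the argument anchored at the deterministic times appearing in hypothesis (c) and reuses the $\Lambda(n)$ machinery it needs anyway for the implication (b)$\Rightarrow$(a) in Lemma \ref{lem:stopped arcsine law}.
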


\begin{proof}
Keeping the notation from the previous lemma, notice that $\Lambda(n)\le n$ and recall that $n^{-1}\Lambda(n)\to\pi_{i}$ $\Prob_{i}$-a.s.
As a consequence, $\Lambda'(n):=\Lambda(n)\vee n_{\eps}$ and $\Lambda''(n):=\Lambda(n)\wedge n^{\eps}$, where $n_{\eps}:=\lceil(1-\eps)\pi_{i}n\rceil$ and $n^{\eps}:=\lceil(1+\eps)\pi_{i}n\rceil$ for any fixed $\eps\in (0,1)$, satisfy $\Lambda'(n)-\Lambda(n)\to 0$ and $\Lambda(n)-\Lambda''(n)\to 0$ $\Prob_{i}$-a.s. Moreover, for any stopping time $\nu$ for the sequence $(\tau_{k}(i),S_{\tau_{k}(i)})_{k\ge 0}$, the identity
\begin{align}
\begin{split}\label{eq:aux4}
&\Erw_{i}\left(\sum_{k=1}^{\nu}\chi_{k}(i)\1_{\{S_{\tau_{k-1}(i)}>0\}}\right)\ =\ \sum_{k\ge 1}\Erw_{i}\chi_{k}(i)\1_{\{S_{\tau_{k-1}(i)}>0,\nu>k-1\}}\\
&\hspace{2cm}=\ \Erw_{i}\tau(i)\sum_{k\ge 1}\Prob_{i}(S_{\tau_{k-1}(i)}>0,\nu>k-1)\\
&\hspace{2cm}=\ \frac{1}{\pi_{i}}\,\Erw_{i}\left(\sum_{k=1}^{\nu}\1_{\{S_{\tau_{k-1}(i)}>0\}}\right)
\end{split}
\end{align}
holds true and will be utilized hereafter for $\nu=\Lambda(n)+1=\inf\{k:\tau_{k}(i)>n\}$. Also noting that
\begin{align}
&\hspace{4cm}\lim_{n\to\infty}\frac{1}{n}\,\Erw_{i}\chi_{\Lambda(n)+1}(i)\ =\ 0,\label{eq:aux1}\\
&\lim_{n\to\infty}\frac{1}{n}\,\Erw_{i}\left(\sum_{k=1}^{\Lambda(n)+1}\chi_{k}(i)\,\1_{\{-H_{k}^{i}<S_{\tau_{k-1}(i)}\le D_{k}^{i}\}}\right)\ \le\ \lim_{n\to\infty}\frac{n+1}{n}\,\Erw_{i}L_{n+1}^{i}\ =\ 0\label{eq:aux2},
\shortintertext{and}
&\hspace{1.5cm}\lim_{n\to\infty}\frac{1}{n}\,\Erw_{i}(\Lambda'(n)-\Lambda(n))\ =\ \lim_{n\to\infty}\frac{1}{n}\,\Erw_{i}(\Lambda''(n)-\Lambda(n))\ =\ 0,\label{eq:aux3}
\end{align}
we now infer
\begin{align*}
\rho\ &=\ \lim_{n\to\infty}\frac{\Erw_{i}N_{n}^{>}}{n}\ \stackrel{\eqref{eq:aux1}}{=}\ \lim_{n\to\infty}\frac{1}{n}\,\Erw_{i}\left(\sum_{k=1}^{\tau_{\Lambda(n)+1}(i)}\1_{\{S_{k}>0\}}\right)\\
&\le\ \liminf_{n\to\infty}\frac{1}{n}\,\Erw_{i}\left(\sum_{k=1}^{\Lambda(n)+1}\chi_{k}(i)\1_{\{S_{\tau_{k-1}(i)}+H_{k}^{i}>0\}}\right)\\
&\stackrel{\eqref{eq:aux2}}{=}\ \liminf_{n\to\infty}\frac{1}{n}\,\Erw_{i}\left(\sum_{k=1}^{\Lambda(n)+1}\chi_{k}(i)\1_{\{S_{\tau_{k-1}(i)}>0\}}\right)\\
&\stackrel{\eqref{eq:aux4}}{=}\ \liminf_{n\to\infty}\frac{1}{\pi_{i}n}\,\Erw_{i}\left(\sum_{k=1}^{\Lambda(n)+1}\1_{\{S_{\tau_{k-1}(i)}>0\}}\right)\\
&\stackrel{\eqref{eq:aux3}}{=}\ \liminf_{n\to\infty}\frac{1}{\pi_{i}n}\,\Erw_{i}\left(\sum_{k=1}^{\Lambda''(n)+1}\1_{\{S_{\tau_{k-1}(i)}>0\}}\right)\\
&\le\ (1+\eps)\liminf_{n\to\infty}\frac{1}{n^{\eps}}\sum_{k=1}^{n^{\eps}}\Prob_{i}(S_{\tau_{k-1}(i)}>0)
\shortintertext{and, conversely,}
\rho\ &\ge\ \limsup_{n\to\infty}\frac{1}{n}\,\Erw_{i}\left(\sum_{k=1}^{\Lambda(n)+1}\chi_{k}(i)\1_{\{S_{\tau_{k-1}(i)}-D_{k}^{i}>0\}}\right)\\
&\stackrel{\eqref{eq:aux2}}{=}\ \limsup_{n\to\infty}\frac{1}{n}\,\Erw_{i}\left(\sum_{k=1}^{\Lambda(n)+1}\chi_{k}(i)\1_{\{S_{\tau_{k-1}(i)}>0\}}\right)\\
&\stackrel{\eqref{eq:aux4}}{=}\ \limsup_{n\to\infty}\frac{1}{\pi_{i}n}\,\Erw_{i}\left(\sum_{k=1}^{\Lambda(n)+1}\1_{\{S_{\tau_{k-1}(i)}>0\}}\right)\\
&\stackrel{\eqref{eq:aux3}}{=}\ \limsup_{n\to\infty}\frac{1}{\pi_{i}n}\,\Erw_{i}\left(\sum_{k=1}^{\Lambda'(n)+1}\1_{\{S_{\tau_{k-1}(i)}>0\}}\right)\\
&\ge\ (1-\eps)\limsup_{n\to\infty}\frac{1}{n_{\eps}}\sum_{k=1}^{n_{\eps}}\Prob_{i}(S_{\tau_{k-1}(i)}>0).
\end{align*}
Since $\eps\in (0,1)$ was arbitrarily chosen and $\{n_{\eps}:n\in\N\}=\{n^{\eps}:n\in\N\}=\N$ for all sufficiently small $\eps$, we infer validity of Theorem \ref{thm:MRW arcsine}(b).\qed
\end{proof}

\begin{proof}[of Theorem \ref{thm:MRW arcsine}(a)--(c)]
Fix any $i\in\cS$. Then (a) implies (c) by taking expectations, and (c) implies (b) by Lemma \ref{lem:(c) implies (b)}. To see that (b) implies (a), note first that Lemma \ref{lem:stopped arcsine law} provides us with
$$ \frac{N_{\tau_{\Lambda(n)}(i)}^{>}}{\tau_{\Lambda(n)}(i)}\ \xrightarrow{d}\ AS(\rho)\quad\text{and}\quad\frac{N_{\tau_{\Lambda(n)+1}(i)}^{>}}{\tau_{\Lambda(n)+1}(i)}\ \xrightarrow{d}\ AS(\rho) $$
as $n\to\infty$. The assertion now follows because $N_{\tau_{\Lambda(n)}(i)}^{>}\le N_{n}^{>}\le N_{\tau_{\Lambda(n)+1}(i)}^{>}$ and $\tau_{\Lambda(n)}(i)\le n\le\tau_{\Lambda(n)+1}(i)$, thus
$$ \frac{N_{\tau_{\Lambda(n)}(i)}^{>}}{\tau_{\Lambda(n)+1}(i)}\ \le\ \frac{N_{n}^{>}}{n}\ \le\ \frac{N_{\tau_{\Lambda(n)+1}(i)}^{>}}{\tau_{\Lambda(n)}(i)}, $$
and $\tau_{\Lambda(n)+1}(i)/\tau_{\Lambda(n)}(i)\to 1$ $\Prob_{i}$-a.s.

\vspace{.1cm}
In order to show that (a)--(c) hold under $\Prob_{j}$ for any $j\ne i$ as well, pick any $\eps>0$ and an integer sequence $(k_{n})_{n\ge 1}$ such that $k_{n}\to \infty$ and $n^{-1}k_{n}\to 0$. Fix $j\ne i$ and choose $x>0$ so large that $\Prob_{j}(|S_{\tau(i)}|>x)<\eps$. Then
\begin{align*}
\frac{1}{n}\sum_{k=1}^{n}\,&\Prob_{j}(S_{k}>0)\ \le\ \frac{1}{n}\,\Erw_{j}\left(\1_{\{\tau(i)\le k_{n}\}}\sum_{k=\tau(i)}^{n}\1_{\{S_{k}>0\}}\right)\,+\,\frac{k_{n}}{n}\,+\,\Prob_{j}(\tau(i)>k_{n})\\
&\le\ \frac{1}{n}\,\Erw_{j}\left(\1_{\{\tau(i)\le k_{n},|S_{\tau(i)}|\le x\}}\sum_{k=\tau(i)}^{n}\1_{\{S_{k}>0\}}\right)\,+\,\frac{k_{n}}{n}\,+\,\Prob_{j}(\tau(i)>k_{n})\,+\,\eps\\
&\le\ \frac{1}{n}\sum_{k=1}^{n}\Prob_{i}(S_{k}>-x)\,+\,\frac{k_{n}}{n}\,+\,\Prob_{j}(\tau(i)>k_{n})\,+\,\eps.
\end{align*}
Use Lemma \ref{lem:arcsine extended} to see that
$$ \lim_{n\to\infty}\frac{1}{n}\sum_{k=1}^{n}\Prob_{i}(S_{k}>y)\ =\ \lim_{n\to\infty}\frac{\Erw_{i}N_{n}^{>}(y)}{n}\ =\ \rho $$
for all $y\in\R$. Therefore,
$$ \limsup_{n\to\infty}\frac{1}{n}\sum_{k=1}^{n}\Prob_{j}(S_{k}>0)\ \le\ \rho+\eps. $$
By a similar argument, we find
\begin{align*}
\frac{1}{n}\sum_{k=1}^{n}\Prob_{j}(S_{k}>0)\ &\ge\ \frac{1}{n}\,\Erw_{j}\left(\1_{\{\tau(i)\le k_{n},|S_{\tau(i)}|\le x\}}\sum_{k=\tau(i)}^{n}\1_{\{S_{k}>0\}}\right)\\
&\ge\ \Prob_{j}(\tau(i)\le k_{n},|S_{\tau(i)}|\le x)\left(\frac{1}{n}\sum_{k=1}^{n-k_{n}}\Prob_{i}(S_{k}>x)\right)\\
&\ge\ \big(1-\Prob_{j}(\tau(i)>k_{n})-\eps\big)\left(\frac{1}{n}\sum_{k=1}^{n}\Prob_{i}(S_{k}>x)\,-\,\frac{k_{n}}{n}\right)
\end{align*}
and thereby
$$ \liminf_{n\to\infty}\frac{1}{n}\sum_{k=1}^{n}\Prob_{j}(S_{k}>0)\ \ge\ (1-\eps)\rho. $$
Since $\eps>0$ was arbitrary, we conclude validity of Theorem \ref{thm:MRW arcsine}(c) for $j\ne i$ and thus also of (a) and (b) by the first part of the proof.\qed
\end{proof}

\begin{Rem}\label{rem:Spitzer condition P_pi proof}\rm
By adapting the previously given argument, it is now easily proved that \eqref{eq:Spitzer condition P_pi} implies Theorem \ref{thm:MRW arcsine}(c). First note that, by Lemma \ref{lem:MRW concentration}, we have
$$ \frac{1}{n}\sum_{k=1}^{n}\Prob_{\pi}(S_{k}>x)\ =\ \rho $$
for all $x\in\R$. Fix $i\in\cS$ and pick an arbitrary $\eps\in (0,1)$. Choose $(k_{n})_{n\ge 1}$ as above and $x$ such that $\Prob_{\pi}(|S_{\tau(i)}|\le x)<\eps$. Then
\begin{align*}
&\frac{1}{n}\sum_{k=1}^{n}\,\Prob_{\pi}(S_{k}>x)\ \le\ \frac{1}{n}\sum_{k=1}^{n}\Prob_{i}(S_{k}>0)\,+\,\frac{k_{n}}{n}\,+\,\Prob_{j}(\tau(i)>k_{n})\,+\,\eps
\shortintertext{and}
&\frac{1}{n}\sum_{k=1}^{n}\Prob_{\pi}(S_{k}>-x)\ \ge\ (1-\Prob_{\pi}(\tau(i)>k_{n})-\eps)\left(\frac{1}{n}\sum_{k=1}^{n}\Prob_{i}(S_{k}>0)\,-\,\frac{k_{n}}{n}\right)
\end{align*}
and from this one easily concludes Theorem \ref{thm:MRW arcsine}(c) for the chosen $i\in\cS$.
\end{Rem}

\section{The strong Spitzer condition: Proof of Theorem \ref{thm:MRW arcsine}(d)}\label{sec:Assertion (d)}

\subsection{The case $0<\rho<1$}\label{subsec:0<rho<1}

For an ordinary random walk $(S_{n})_{n\ge 0}$, Doney's \cite{Doney:95} proof of the equivalence of the Spitzer condition and its strong version is based on the Spitzer-type formula (see \cite[Eq.~(7.7) on p.~414]{Feller:71})
\begin{equation}\label{eq:Spitzer formula}
\Prob(S_{n}>0)\ =\ \sum_{k\ge 1}\frac{n}{k}\,\Prob(\sigma(k)=n)
\end{equation}
where $\sigma(k)$ denotes the (possibly defective) $k$-th strictly ascending ladder epoch of $(S_{n})_{n\ge 0}$. The subsequent proposition provides a substitute for this formula in the Markov-modulated situation which again uses Spitzer's combinatorial argument but for the i.i.d. blocks defined by the successive returns of the driving chain to an arbitrarily fixed state.

\begin{Prop}\label{prop:Spitzer-type lemma}
Let $(M_{n},S_{n})_{n\ge 0}$ be a MRW with positive recurrent driving chain on $\cS$. For any fixed $i\in\cS$, let $(\sigma(n))_{n\ge 1}$ be the (possibly defective) sequence of strictly ascending ladder epochs of the embedded random walk $(S_{\tau_{n}(i)})_{n\ge 0}$ under $\Prob_{i}$. Then
\begin{equation}\label{eq:Spitzer-type lemma}
\Prob_{i}(M_{n}=i,S_{n}>0)\ =\ \sum_{k\ge 1}\frac{n}{k}\,\Erw_{i}\left(\frac{\sigma(k)}{\tau_{\sigma(k)}(i)}\1_{\{\tau_{\sigma(k)}(i)=n\}}\right).
\end{equation}
for all $n\in\N$.
\end{Prop}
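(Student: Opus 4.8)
The plan is to reduce the asserted identity to a block-level Spitzer (Baxter--Spitzer) identity for the embedded walk $(S_{\tau_m(i)})_{m\ge0}$, to which Spitzer's cyclic argument applies because the accumulated time $\tau_m(i)$ is an additive functional that rides along as a cyclic-invariant weight. First I would rewrite the left-hand side. Under $\Prob_i$ one has the disjoint decomposition $\{M_n=i\}=\bigcup_{m\ge1}\{\tau_m(i)=n\}$, and on $\{\tau_m(i)=n\}$ we have $S_n=S_{\tau_m(i)}$, so
\[
\Prob_i(M_n=i,S_n>0)\ =\ \sum_{m\ge1}\Prob_i\big(\tau_m(i)=n,\,S_{\tau_m(i)}>0\big).
\]
On the right-hand side the weight simplifies on $\{\tau_{\sigma(k)}(i)=n\}$, where $\tfrac nk\cdot\tfrac{\sigma(k)}{\tau_{\sigma(k)}(i)}=\tfrac{\sigma(k)}{k}$, whence
\[
\sum_{k\ge1}\frac nk\,\Erw_i\!\Big(\tfrac{\sigma(k)}{\tau_{\sigma(k)}(i)}\1_{\{\tau_{\sigma(k)}(i)=n\}}\Big)
\ =\ \sum_{m\ge1}\sum_{k\ge1}\frac mk\,\Prob_i\big(\sigma(k)=m,\tau_m(i)=n\big).
\]
Thus it suffices to prove, for each fixed $m$, the per-block identity $\Prob_i(\tau_m(i)=n,\,S_{\tau_m(i)}>0)=\sum_{k\ge1}\tfrac mk\,\Prob_i(\sigma(k)=m,\tau_m(i)=n)$, which is exactly the classical Spitzer formula for the $\xi$-walk carrying the extra indicator $\1_{\{\tau_m(i)=n\}}$.

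To establish this, write $\xi_j:=S_{\tau_j(i)}-S_{\tau_{j-1}(i)}$, so that the pairs $(\xi_j,\chi_j(i))_{j\ge1}$ are i.i.d.\ under $\Prob_i$, and observe that for a fixed realization both the total increment $S_{\tau_m(i)}=\sum_{j=1}^m\xi_j$ and the total time $\tau_m(i)=\sum_{j=1}^m\chi_j(i)$ are invariant under the $m$ cyclic permutations of these pairs; only the order of $m$ as a strict ascending ladder epoch of the $\xi$-walk varies within a cyclic class. Hence Spitzer's combinatorial lemma, applied at the level of a single cyclic class and then averaged by cyclic interchangeability, yields the weight $\tfrac mk$ while the cyclic-invariant factor $\1_{\{\tau_m(i)=n\}}$ passes through untouched. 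I would organize the coefficient bookkeeping through the bivariate Baxter--Spitzer identity
\[
1-\Erw_i\big(s^{\sigma}t^{\tau_\sigma(i)}\1_{\{\sigma<\infty\}}\big)\ =\ \exp\!\Big(-\sum_{m\ge1}\frac{s^m}{m}\,\Erw_i\big(t^{\tau_m(i)}\1_{\{S_{\tau_m(i)}>0\}}\big)\Big),
\]
with $\sigma:=\sigma(1)$, the variable $s$ marking the block index and $t$ the accumulated time. Writing $\hat F(s,t):=\Erw_i(s^{\sigma}t^{\tau_\sigma(i)}\1_{\{\sigma<\infty\}})$ and using the renewal structure of ladder epochs to get $\hat F(s,t)^k=\Erw_i(s^{\sigma(k)}t^{\tau_{\sigma(k)}(i)}\1_{\{\sigma(k)<\infty\}})$, I would take logarithms, differentiate in $s$, expand $(1-\hat F)^{-1}=\sum_{k\ge0}\hat F^{k}$ so that $\tfrac{s\,\partial_s\hat F}{1-\hat F}=\sum_{k\ge1}\tfrac1k\,s\,\partial_s(\hat F^{k})$, set $s=1$, and read off the coefficient of $t^n$; this returns precisely the simplified right-hand side above, hence the proposition.

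The main obstacle is the validity of the cyclic/combinatorial step in the presence of \emph{two} coupled additive functionals: the increments $\xi_j$ that define the ladder structure and the block lengths $\chi_j(i)$ that define time. One must verify that the bijection underlying Spitzer's lemma respects the constraint $\sum_{j}\chi_j(i)=n$ (it does, this quantity being a function of the cyclic class alone), treat ties among partial sums by the usual strict-inequality conventions, and accommodate the possible defectiveness of $\sigma$ (handled by the indicators $\1_{\{\sigma(k)<\infty\}}$). The remaining analytic points are routine: summability for $|t|<1$ is immediate from $\tau_m(i)\ge m$, which gives $\Erw_i(t^{\tau_m(i)}\1_{\{S_{\tau_m(i)}>0\}})\le t^{m}$ and $\hat F(1,t)<1$, and this also legitimizes the passage $s\uparrow 1$ and the termwise coefficient extraction.
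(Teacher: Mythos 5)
Your proof is correct and follows essentially the same route as the paper's: both reduce the claim to the per-block identity $\Prob_i(\tau_m(i)=n,\,S_{\tau_m(i)}>0)=\sum_{k\ge1}\tfrac mk\,\Prob_i(\sigma(k)=m,\tau_m(i)=n)$ and obtain it from Spitzer's cyclic combinatorial lemma applied to the i.i.d.\ return blocks, the key point in each case being that the accumulated time $\tau_m(i)=\sum_{j\le m}\chi_j(i)$ is invariant under cyclic rearrangement, so the constraint $\tau_m(i)=n$ passes through the argument unchanged. The bivariate Baxter--Spitzer generating-function step you append is an equivalent repackaging of the final summation over $m$ and $k$, not a genuinely different argument.
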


Notice that \eqref{eq:Spitzer-type lemma} reduces to \eqref{eq:Spitzer formula} as it must if $(M_{n})_{n\ge 0}$ is a single-state Markov chain and thus $(S_{n})_{n\ge 0}$ an ordinary random walk.

\begin{proof}
For fixed $m,n\in\N$, consider the event $A_{m,n}:=\{\tau_{m}(i)=n,S_{n}>0\}$ and note that $\1_{A_{m,n}}=\1_{B_{m,n}}(Y_{1},\ldots,Y_{m})$, where
\begin{align*}
&B_{m,n}\ :=\ \left\{(i_{j},x_{j})_{1\le j\le n}\in (\cS\times\R)^{n}:i_{n}=i,\sum_{r=1}^{n}\1_{\{i\}}(i_{r})=m\text{ and }\sum_{r=1}^{n}x_{r}>0\right\}
\shortintertext{and}
&\hspace{3cm}Y_{l}\ :=\ \left(M_{j},X_{j}\right)_{\tau_{l-1}(i)+1\le j\le\tau_{l}(i)},\quad l\in\N.
\end{align*}
Put $\bY^{\,1}\!:=(Y_{1},\ldots,Y_{m}),\bY^{\,2}\!:=(Y_{m},Y_{1},\ldots,Y_{m-1}),\ldots,\bY^{\,m}:=(Y_{2},\ldots,Y_{m},Y_{1})$, which are the cyclic rearrangements of the i.i.d. block vectors $Y_{1},\ldots,Y_{m}$ and thus identically distributed (under $\Prob_{i}$). Denote by 
$$ \bS^{\,l}\ =\ \big(S_{1}^{l},\ldots,S_{n}^{l}\big),\quad l=1,\ldots,m $$
the resulting vectors of partial sums after the rearrangements, thus $\bS^{\,1}:=(S_{1},\ldots,S_{n})$. Notice that  $\1_{B_{m,n}}(\bY^{\,1})=\ldots=\1_{B_{m,n}}(\bY^{\,m})$ and $S_{n}^{l}=S_{n}$ for each $l=1,\ldots,m$.

\vspace{.1cm}
Now fix any $k\in\N$ and suppose that $k$ is the number of strict record values among those $S_{j}$ in $\bS^{\,1}$ with $M_{j}=i$, in other words, the number of strictly ascending ladder heights in $\{S_{\tau_{1}(i)},\ldots,S_{\tau_{m-1}(i)},S_{\tau_{m}(i)}\}$, i.e. $\sigma(k)\le m<\sigma(k+1)$. We can write this event as $(\bS^{\,1})^{-1}(B_{k,m,n})$ for some $B_{k,m,n}\subset B_{m,n}$. The crucial fact to be used hereafter is that the number $k$ does not vary for the vectors $\bS^{\,l}$, thus $(\bS^{\,1})^{-1}(B_{k,m,n})=\ldots=(\bS^{\,m})^{-1}(B_{k,m,n})$, and that $k$ is also the number of these vectors for which the terminal value $S_{n}^{l}$ is a record. This follows by a simple combinatorial argument (see \cite[Lemma 1 on p.~412]{Feller:71}). Defining $I_{k}^{l}:=1$ if $\bY^{\,l}\in B_{k,m,n}$ and $S_{n}^{l}$ is a record value, and $I_{k}^{l}:=0$ otherwise, it follows that $I_{k}^{1}+\ldots+I_{k}^{m}$ takes only the two values $0$ and $k$. Since $I_{k}^{1},\ldots,I_{k}^{m}$ are also identically distributed under $\Prob_{i}$ with
$$ \Erw_{i}I_{k}^{1}\ =\ \Prob_{i}(A_{m,n}\cap\{\sigma(k)=m\})\ =\ \Prob_{i}(\sigma(k)=m,\tau_{\sigma(k)}(i)=n), $$
we arrive at
\begin{align*}
\Prob_{i}(\sigma(k)=m,\tau_{\sigma(k)}(i)=n)\ &=\ \frac{1}{m}\,\Erw_{i}(I_{k}^{1}+\ldots+I_{k}^{m})\ =\ \frac{k}{m}\,\Prob_{i}(I_{k}^{1}+\ldots+I_{k}^{m}=k).
\end{align*}
On the other hand, the events $\{I_{k}^{1}+\ldots+I_{k}^{m}=k\}$ for $k\in\N$ are pairwise disjoint and their union is $A_{m,n}$, hence
\begin{align*}
\Prob_{i}(A_{m,n})\ &=\ \sum_{k\ge 1}\frac{m}{k}\,\Prob_{i}(\sigma(k)=m,\tau_{\sigma(k)}(i)=n)\\
&=\ \sum_{k\ge 1}\Erw_{i}\left(\frac{\sigma(k)}{k}\1_{\{\sigma(k)=m,\tau_{\sigma(k)}(i)=n\}}\right)\\
&=\ \sum_{k\ge 1}\frac{n}{k}\,\Erw_{i}\left(\frac{\sigma(k)}{\tau_{\sigma(k)}(i)}\1_{\{\sigma(k)=m,\tau_{\sigma(k)}(i)=n\}}\right).
\end{align*}
Now the assertion \eqref{eq:Spitzer-type lemma} follows upon summing both sides over $m\in\N$ and using that the left-hand side then equals $\Prob_{i}(M_{n}=i,S_{n}>0)$.\qed
\end{proof}

Formula \eqref{eq:Spitzer-type lemma} forms the key ingredient to the following lemma which in turn furnishes our proof of Theorem \ref{thm:MRW arcsine}(d) in the case $0<\rho<1$.

\begin{Lemma}\label{lem:S_n>0,M_n=i}
Let $(M_{n},S_{n})_{n\ge 0}$ be a MRW with positive recurrent driving chain satisfying $\Erw_{i}\tau(i)^{2}<\infty$ and Theorem \ref{thm:MRW arcsine}(a)--(c) for some $\rho\in (0,1)$ and all $i\in\cS$. Then
\begin{equation}\label{lem:S_n>0,M_n=i}
\lim_{n\to\infty}\Prob_{i}(M_{nd}=i,S_{nd}>0)\ =\ d\rho\pi_{i}
\end{equation}
all $i\in\cS$, where $d\in\N$ denotes the period of $(M_{n})_{n\ge 0}$.
\end{Lemma}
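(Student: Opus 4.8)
The plan is to turn the Spitzer-type formula of Proposition \ref{prop:Spitzer-type lemma} into a single generating-function identity, read off the Abelian (Cesàro) limit to fix the constant $d\rho\pi_{i}$ with no constant-chasing, and then upgrade this to the pointwise limit \eqref{lem:S_n>0,M_n=i} by a Tauberian argument fed by the regular variation that Doney's theorem supplies for the embedded walk.

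First I would pass to generating functions. Writing $U_{i}(t):=\sum_{n\ge 1}t^{n}\,\Prob_{i}(M_{n}=i,S_{n}>0)$ for $t\in[0,1)$ and noting that on $\{\tau_{\sigma(k)}(i)=n\}$ one has $n/\tau_{\sigma(k)}(i)=1$, the formula \eqref{eq:Spitzer-type lemma} collapses to
\[ U_{i}(t)\ =\ \sum_{k\ge 1}\frac{1}{k}\,\Erw_{i}\!\left(\sigma(k)\,t^{\tau_{\sigma(k)}(i)}\right). \]
Since $0<\rho<1$ forces the embedded walk $(S_{\tau_{n}(i)})_{n\ge 0}$ to oscillate, every ladder epoch $\sigma(k)$ is $\Prob_{i}$-a.s.\ finite. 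Decomposing into the i.i.d.\ ladder blocks $(\sigma(j)-\sigma(j-1),\,\tau_{\sigma(j)}(i)-\tau_{\sigma(j-1)}(i))$, each distributed as $(\sigma,\tau_{\sigma}(i))$ with $\sigma:=\sigma(1)$, gives $\Erw_{i}(\sigma(k)\,t^{\tau_{\sigma(k)}(i)})=k\,\Theta(t)\,\psi(t)^{k-1}$ with $\psi(t):=\Erw_{i}(t^{\tau_{\sigma}(i)})$ and $\Theta(t):=\Erw_{i}(\sigma\,t^{\tau_{\sigma}(i)})$, whence
\[ U_{i}(t)\ =\ \frac{\Theta(t)}{1-\psi(t)}. \]
As returns of $\bM$ to $i$ occur only at multiples of $d$, all of $\psi,\Theta,U_{i}$ are power series in $t^{d}$; substituting $u=t^{d}$ reduces the problem to the span-$d$ renewal process generated by $\tau_{\sigma}(i)$, and \eqref{lem:S_n>0,M_n=i} becomes a statement about the coefficients of $\Theta/(1-\psi)$ along $d\N$.

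Next I would pin down the constant through the Abelian limit. Re-indexing the visits of $\bM$ to $i$ gives
\[ \sum_{n=1}^{N}\Prob_{i}(M_{n}=i,S_{n}>0)\ =\ \Erw_{i}\,\#\{1\le l\le\Lambda(N):S_{\tau_{l}(i)}>0\}, \]
with $\Lambda(N)=\sup\{k:\tau_{k}(i)\le N\}$ as in Lemma \ref{lem:stopped arcsine law}. Since $\Lambda(N)/N\to\pi_{i}$ $\Prob_{i}$-a.s.\ and, by the classical arcsine law applied to the embedded walk (whose Spitzer condition is Theorem \ref{thm:MRW arcsine}(b)), $\Lambda(N)^{-1}\#\{l\le\Lambda(N):S_{\tau_{l}(i)}>0\}\xrightarrow{d}AS(\rho)$, whose mean is $\rho$, a bounded-convergence argument (the count being at most $\Lambda(N)\le N$) yields $N^{-1}\sum_{n\le N}\Prob_{i}(M_{n}=i,S_{n}>0)\to\rho\pi_{i}$. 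By Karamata's Abelian theorem this gives $(1-t)\,U_{i}(t)\to\rho\pi_{i}$ as $t\uparrow 1$, equivalently $(1-u)\,\Theta(u)/(1-\psi(u))\to d\rho\pi_{i}$ after $u=t^{d}$, which is exactly the candidate coefficient limit.

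The main obstacle is the Tauberian upgrade, since Abelian convergence alone never forces coefficientwise convergence; the needed Tauberian input is the regular variation carried by $\sigma$. Here I would invoke Doney \cite{Doney:95}: Theorem \ref{thm:MRW arcsine}(b) being Spitzer's condition for the embedded walk, $\Prob_{i}(S_{\tau_{n}(i)}>0)\to\rho$ and $\Prob_{i}(\sigma>n)$ is regularly varying of index $-\rho$. The role of the hypothesis $\Erw_{i}\tau(i)^{2}<\infty$ is to transport this regularity from the ladder clock to the real clock: via the bivariate Spitzer--Baxter identity $-\log(1-\psi(t))=\sum_{n\ge 1}n^{-1}\Erw_{i}(t^{\tau_{n}(i)}\1_{\{S_{\tau_{n}(i)}>0\}})$, the finite second moment lets one replace $t^{\tau_{n}(i)}$ by $t^{\,n/\pi_{i}}$ up to negligible fluctuations (concentration of $\tau_{n}(i)$ about its mean $n/\pi_{i}$), so that $-\log(1-\psi(t))\sim\rho\log\tfrac{1}{1-t}$ and $1-\psi(t)$ is regularly varying of index $\rho$; combined with the Abelian limit this forces $\Theta(t)$ to be regularly varying of index $\rho-1$. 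With both factors regularly varying, a Tauberian/renewal argument in the spirit of Doney --- controlling the convolution of the $\Theta$-coefficients with the renewal mass of the span-$d$, infinite-mean renewal process generated by $\tau_{\sigma}(i)$ --- extracts the coefficients of $\Theta/(1-\psi)$ along $d\N$, and their limit must equal the already-identified value $d\rho\pi_{i}$. I expect this last renewal-theoretic step, ruling out oscillation of $\Prob_{i}(M_{nd}=i,S_{nd}>0)$, to be the genuinely hard part and the place where the second-moment assumption is indispensable.
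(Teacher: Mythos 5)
Your generating-function reformulation of Proposition \ref{prop:Spitzer-type lemma} is correct (on $\{\tau_{\sigma(k)}(i)=n\}$ the ratio $n/\tau_{\sigma(k)}(i)$ is indeed $1$, and the i.i.d.\ ladder-block factorization gives $U_{i}(t)=\Theta(t)/(1-\psi(t))$), and so is the Abelian limit $(1-t)U_{i}(t)\to\rho\pi_{i}$ via the identification $\sum_{n\le N}\Prob_{i}(M_{n}=i,S_{n}>0)=\Erw_{i}\#\{l\le\Lambda(N):S_{\tau_{l}(i)}>0\}$. But the step you flag as ``the genuinely hard part'' is not a hard step of a correct proof --- it is a gap that your chosen route cannot close. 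From $(1-t)\,\Theta(t)/(1-\psi(t))\to\rho\pi_{i}$ together with regular variation of $1-\psi$ (index $\rho$) and of $\Theta$ (index $\rho-1$), Karamata's Tauberian theorem returns only the Ces\`aro statement you started from; pointwise convergence of the coefficients is exactly the strong renewal theorem for the infinite-mean renewal sequence generated by $\tau_{\sigma}(i)$, and this is known to \emph{fail} in general for $\rho\le 1/2$ under regular variation of the tail alone (Williamson's counterexamples; the necessary and sufficient condition is due to Caravenna and Doney). On top of that, your target coefficient is a convolution of that renewal sequence with the coefficients of $\Theta$, whose total mass $\Erw_{i}\sigma$ is infinite for $\rho<1$, so even granting a strong renewal theorem you would face a further $0\times\infty$ uniformity problem. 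Decoupling $\sigma(k)$ from $\tau_{\sigma(k)}(i)$ into two separate generating functions is precisely what makes this intractable.

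The paper avoids all of this by keeping the pair coupled: it writes the summand as $\frac{n}{k}\cdot\frac{\sigma(k)}{\tau_{\sigma(k)}(i)}$ on $\{\tau_{\sigma(k)}(i)=n\}$, replaces the ratio by $\pi_{i}\pm\eps$ using the a.s.\ convergence $\sigma(k)/\tau_{\sigma(k)}(i)\to\pi_{i}$, and then applies Gnedenko's local limit theorem to $\tau_{\sigma(k)}(i)$ only over the bulk range $\vth^{-1}(\delta n)\le k\le\vth^{-1}(n/\delta)$, where the Riemann sum $\sum\frac{n}{k\vth(k)}f(n/\vth(k))\to\rho\int_{\delta}^{1/\delta}f$ gives a clean \emph{lower} bound $\liminf_{n}\Prob_{i}(M_{n}=i,S_{n}>0)\ge\pi_{i}\rho$. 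The exceptional event $\{\sigma(k)/\tau_{\sigma(k)}(i)<\pi_{i}-\eps\}$ is controlled by the Chow--Lai bound $n\,\Prob_{i}(\sup_{m\ge\eps n}(\tau_{m}(i)-m\pi_{i}^{-1})/m>\eps)\to 0$ --- this is where $\Erw_{i}\tau(i)^{2}<\infty$ actually enters, not in smoothing the bivariate Spitzer--Baxter identity as you propose. Crucially, the matching upper bound is \emph{not} obtained by any Tauberian or renewal argument: the same lower bound applied to $(M_{n},-S_{n})_{n\ge 0}$ (Remark \ref{rem:reflected MRW}) gives $\liminf_{n}\Prob_{i}(M_{n}=i,S_{n}<0)\ge\pi_{i}(1-\rho)$, which combined with $\Prob_{i}(M_{n}=i)\to\pi_{i}$ yields $\limsup_{n}\Prob_{i}(M_{n}=i,S_{n}\ge 0)\le\pi_{i}\rho$. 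This reflection trick is the device that sidesteps the strong renewal theorem entirely, and it is missing from your plan. To repair your argument you would essentially have to import these same ingredients, at which point the generating-function detour buys nothing.
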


\begin{proof}
We may restrict ourselves to the case when $(M_{n})_{n\ge 0}$ is aperiodic, thus $d=1$.
Fix any $i\in\cS$ and let $\Prob_{i}$ be the underlying probability measure. If Theorem \ref{thm:MRW arcsine}(b) holds, then $\sigma(1)$ lies in the domain of attraction of $\mathsf{S}(\rho)$, the one-sided stable law with index $\rho$ (see e.g.\ \cite[Thm. 8.9.12]{BingGolTeug:89}), and since $\tau_{\sigma(n)}/\sigma(n)\to\pi_{i}^{-1}$ $\Prob_{i}$-a.s., the same holds true for $\tau_{\sigma(1)}$. In fact, we can choose a continuous increasing function $\vth:[0,\infty)\to [0,\infty)$ which has inverse $\vth^{-1}$ and is regularly varying with index $1/\rho$ at $\infty$ such that $\tau_{\sigma(n)}/\vth(n)$ converges in distribution to $\mathsf{S}(\rho)$. Let $f$ denote its density. By making use of the local limit theorem of Gnedenko (see \cite[Thm.~4.2.1]{Ibragimov+Linnik:71}), Doney \cite{Doney:95} showed that for all $\delta>0$
\begin{align*}
&\sum_{\vth^{-1}(\delta n)\le k\le\vth^{-1}(n/\delta)}\frac{n}{k\vth(k)}\,\Prob_{i}(\tau_{\sigma(k)}(i)=n)\\
&\hspace{1cm}=\ \sum_{\vth^{-1}(\delta n)\le k\le\vth^{-1}(n/\delta)}\frac{n}{k\vth(k)}\,f\left(\frac{n}{\vth(k)}\right)\ +\ o(1)\\
&\hspace{1cm}=\ \rho\int_{\delta}^{1/\delta}f(x)\ dx\ +\ o(1)
\end{align*}
as $n\to\infty$. Using this, we infer that, for any $\delta,\eps>0$,
\begin{align*}
\Prob_{i}&(M_{n}=i,S_{n}>0)\\
&\ge\ \sum_{\vth^{-1}(\delta n)\le k\le\vth^{-1}(n/\delta)}\frac{n(\pi_{i}-\eps)}{k}\,\Prob_{i}\left(\frac{\sigma(k)}{\tau_{\sigma(k)}(i)}\ge\pi_{i}-\eps,\,\tau_{\sigma(k)}(i)=n\right)\\
&\ge\ (\pi_{i}-\eps)\left[\sum_{\vth^{-1}(\delta n)\le k\le\vth^{-1}(n/\delta)}\frac{n}{k}\,\Prob_{i}\left(\tau_{\sigma(k)}(i)=n\right)\,-\,R_{n}(\delta)\right]\\
&=\ (\pi_{i}-\eps)\left[\rho\int_{\delta}^{1/\delta}f(x)\ dx\,-\,R_{n}(\delta)\right]\ +\ o(1)
\end{align*}
as $n\to\infty$, where
\begin{align*}
R_{n}(\delta)\ &:=\ \sum_{\vth^{-1}(\delta n)\le k\le\vth^{-1}(n/\delta)}\frac{n}{k}\,\Prob_{i}\left(\frac{\sigma(k)}{\tau_{\sigma(k)}(i)}<\pi_{i}-\eps,\,\tau_{\sigma(k)}(i)=n\right)\\
&\le\ \sum_{\vth^{-1}(\delta n)\le k\le\vth^{-1}(n/\delta)}\frac{n}{k}\,\Prob_{i}\left(\eps<\frac{\sigma(k)}{\tau_{\sigma(k)}(i)}<\pi_{i}-\eps,\,\tau_{\sigma(k)}(i)=n\right)\\
&\qquad+\ \sum_{\vth^{-1}(\delta n)\le k\le\vth^{-1}(n/\delta)}\frac{n}{k}\,\Prob_{i}\left(\sigma(k)<\eps n,\,\tau_{\sigma(k)}(i)=n\right).
\end{align*}
But for $\vth^{-1}(\delta n)\le k\le\vth^{-1}(n/\delta)$, we further find that
\begin{align*}
&\Prob_{i}\left(\eps<\frac{\sigma(k)}{\tau_{\sigma(k)}(i)}<\pi_{i}-\eps,\,\tau_{\sigma(k)}(i)=n\right)\\
&\hspace{1.5cm}\le\ \Prob_{i}\left(\frac{\tau_{\sigma(k)}(i)-\pi_{i}^{-1}\sigma(k)}{\sigma(k)}>\frac{1}{\pi_{i}-\eps}-\frac{1}{\pi_{i}},\sigma(k)>\eps n,\tau_{\sigma(k)}(i)=n\right)\\
&\hspace{1.5cm}\le\ \Prob_{i}\left(\sup_{m\ge\eps n}\frac{\tau_{m}(i)-m\pi_{i}^{-1}}{m}>\frac{\eps}{\pi_{i}(\pi_{i}-\eps)}\right)
\shortintertext{and}
&\hspace{1cm}\Prob_{i}\left(\sigma(k)<\eps n,\,\tau_{\sigma(k)}(i)=n\right)\ \le\ \Prob_{i}\left(\frac{\tau_{\eps n}(i)-\pi_{i}^{-1}\eps n}{\eps n}>\frac{1}{\eps}-\frac{1}{\pi_{i}}\right),
\end{align*}
giving
\begin{align*}
R_{n}(\delta)\ &\le\ \frac{n\big(\vth^{-1}(n/\delta)-\vth^{-1}(\delta n)\big)}{\vth^{-1}(\delta n)}\,\Prob_{i}\left(\sup_{m\ge\eps n}\frac{\tau_{m}(i)-m\pi_{i}^{-1}}{m}>\eps\right)\\
&\le\ Cn\,\Prob_{i}\left(\sup_{m\ge\eps n}\frac{\tau_{m}(i)-m\pi_{i}^{-1}}{m}>\frac{\eps}{\pi_{i}}\right)
\end{align*}
for some $C>0$ and any $\eps>0$ sufficiently small (and with the convention that $\tau_{x}(i):=\tau_{\lceil x\rceil}(i)$). But, for any $\eps>0$, 
$$ a_{n}\ :=\ n\,\Prob_{i}\left(\sup_{m\ge n}\frac{\tau_{m}(i)-m\pi_{i}^{-1}}{m}>\eps\right)\ \xrightarrow{n\to\infty}\ 0 $$
because $\Erw_{i}\tau(i)^{2}<\infty$ ensures $\sum_{n\ge 1}n^{-1}a_{n}<\infty$, see Chow and Lai \cite[Eq. (3.10) with $\alpha=1$ and $p=2$]{Chow+Lai:75}, and $a_{n}/n$ is nonincreasing. By combining the previous estimates and noting that $\int_{\delta}^{1/\delta}f(x)\,dx\to 1$ as $\delta\to 0$, we conclude
\begin{equation}\label{eq:liminf estimate}
\liminf_{n\to\infty}\Prob_{i}(M_{n}=i,S_{n}>0)\ \ge\ \pi_{i}\rho.
\end{equation}
In view of Remark \ref{rem:reflected MRW}, we can repeat the argument for $(M_{n},-S_{n})_{n\ge 0}$ to obtain
\begin{equation*}
\liminf_{n\to\infty}\Prob_{i}(M_{n}=i,S_{n}<0)\ \ge\ \pi_{i}(1-\rho)
\end{equation*}
or, equivalently,
\begin{equation}\label{eq:limsup estimate}
\limsup_{n\to\infty}\Prob_{i}(M_{n}=i,S_{n}\ge 0)\ \le\ \pi_{i}\rho.
\end{equation}
Finally, \eqref{lem:S_n>0,M_n=i} follows by a combination of \eqref{eq:liminf estimate} and \eqref{eq:limsup estimate}.\qed
\end{proof}

\begin{proof}[of Theorem \ref{thm:MRW arcsine}(d)]
Assertion (d) is now easily derived as follows. Fix any $i\in\cS$ and suppose first that the driving chain is aperiodic $(d=1)$. Then we obtain with the help of \eqref{lem:S_n>0,M_n=i} and Lemma \ref{lem:MRW concentration} that
\begin{align*}
\Prob_{i}(M_{n}=j,S_{n}>0)\ &=\ \sum_{k=1}^{n}\int\Prob_{j}(M_{n-k}=j,S_{n-k}>-x)\ \Prob_{i}(\tau(j)=k,S_{k}\in dx)\\
&=\ \sum_{k=1}^{\lfloor n/2\rfloor}\int\Prob_{j}(M_{n-k}=j,S_{n-k}>0)\ \Prob_{i}(\tau(j)=k,S_{k}\in dx)\ +\ o(1)\\
&=\ \pi_{j}\rho\ +\ o(1)
\end{align*}
as $n\to\infty$ and thereupon $\lim_{n\to\infty}\Prob_{i}(S_{n}>0)=\rho$ by summation over $j$.

\vspace{.1cm}
If $(M_{n})_{n\ge 0}$ has period $d\ge 2$, then let $\cS_{r}$, $r=0,\ldots,d-1$, denote the cyclic class of states that can be reached from $i$ at times $nd+r$ for $n\in\N_{0}$. For $j\in\cS_{r}$, it then follows in a similar manner as before that
\begin{align*}
&\Prob_{i}(M_{nd+r}=j,S_{nd+r}>0)\\
&=\ \sum_{k=0}^{\lfloor n/2\rfloor}\int\Prob_{j}(M_{(n-k)d}=j,S_{(n-k)d}>0)\ \Prob_{i}(\tau(j)=kd+r,S_{kd+r}\in dx)\ +\ o(1)\\
&=\ d\pi_{j}\rho\ +\ o(1)
\end{align*}
as $n\to\infty$ and thereupon, using $\pi(\cS_{r})=d^{-1}$,
$$ \Prob_{i}(S_{nd+r}>0)\ =\ \sum_{j\in\cS_{r}}\Prob_{i}(M_{nd+r}=j,S_{nd+r}>0)\ \xrightarrow{n\to\infty}\ \rho $$
for each $r=0,\ldots,d-1$ which again proves Assertion (d).\qed
\end{proof}

\begin{Rem}\label{rem2:extra assumption}\rm
Let us finally comment on the need for the extra condition $\Erw_{i}\tau(i)^{2}<\infty$ which we have used in the estimation of $R_{n}(\delta)$ for the conclusion that
$$ n\,\max_{\vth^{-1}(\delta n)\le k\le\vth(n/\delta)}\Prob_{i}\left(\frac{\sigma(k)}{\tau_{\sigma(k)}(i)}<\pi_{i}-\eps,\,\tau_{\sigma(k)}(i)=n\right)\ =\ o(1) $$
as $n\to\infty$. An approach more in line with Doney's argument in the i.i.d.-case would be to derive this from a local limit theorem for the pair $(\sigma(k)-\pi_{i}\tau_{\sigma(k)},\tau_{\sigma(k)})$. However, this would require some knowledge of the dependence structure between $\sigma(k)$ and $\tau_{\sigma(k)}$ so as to provide the right normalization of $\sigma(k)-\pi_{i}\tau_{\sigma(k)}$. We doubt that this is possible without any extra condition on the given MRW $(M_{n},S_{n})_{n\ge 0}$.
\end{Rem}

\subsection{The case $\rho\in\{0,1\}$}

It clearly suffices to consider the case $\rho=1$ for which we make use of the following result very similar to Lemma 1 by Bertoin and Doney \cite{BertoinDoney:97} which actually goes back to Kesten as noted by them.

\begin{Lemma}
Suppose that, for any fixed $i\in\cS$, $\rho_{n}:=\Prob_{i}(S_{\tau_{n}(i)}>0)\to 1$ as $n\to\infty$. Then
\begin{equation}\label{eq:BertoinDoney Lemma 1(ii)}
\lim_{n\to\infty}\Prob_{i}(S_{k}>0\text{ for }\tau_{2n}(i)\le k\le \tau_{rn}(i))\ =\ 1
\end{equation}
for any fixed integer $r>2$.
\end{Lemma}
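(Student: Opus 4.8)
The plan is to reduce the window estimate to the stronger fact that, under the hypothesis $\rho_n\to 1$, the full walk $(S_n)_{n\ge 0}$ drifts to $+\infty$ $\Prob_i$-almost surely. Granting this, the conclusion is immediate: the last exit time $\nu:=\sup\{k\ge 1:S_k\le 0\}$ is then $\Prob_i$-a.s.\ finite, and since $\tau_{2n}(i)\ge 2n\to\infty$ $\Prob_i$-a.s., we obtain $\Prob_i(\nu<\tau_{2n}(i))\to 1$; on the event $\{\nu<\tau_{2n}(i)\}$ one has $S_k>0$ for \emph{every} $k\ge\tau_{2n}(i)$, in particular throughout $[\tau_{2n}(i),\tau_{rn}(i)]$, which yields \eqref{eq:BertoinDoney Lemma 1(ii)} for every $r>2$. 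Thus the restriction to a finite window and the particular value of $r$ are inessential and are kept only to match the formulation of Bertoin and Doney.

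The first ingredient is purely about ordinary random walks. Since $(S_{\tau_n(i)})_{n\ge 0}$ is a zero-delayed random walk under $\Prob_i$, the hypothesis $\rho_n=\Prob_i(S_{\tau_n(i)}>0)\to 1$ says that this embedded walk is relatively stable at $+\infty$; by the classical theory used by Bertoin and Doney \cite{BertoinDoney:97} (relative stability in probability upgrades to the almost sure statement), this forces $S_{\tau_n(i)}\to+\infty$ $\Prob_i$-a.s., and in particular $\inf_{n\ge 0}S_{\tau_n(i)}>-\infty$ $\Prob_i$-a.s.

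The technical heart, and the step I expect to be the main obstacle, is transferring this drift from the embedded walk to the full walk, i.e.\ ruling out that within-cycle excursions spoil it. The smallest value attained during the $m$-th cycle equals $S_{\tau_{m-1}(i)}-D_m^i$, so the full walk fails to drift to $+\infty$ only if, for some (hence all) $K$, one has $D_m^i>S_{\tau_{m-1}(i)}+K$ for infinitely many $m$. Since the cycle blocks are i.i.d.\ under $\Prob_i$ and $D_m^i$ is a functional of the $m$-th block alone, $D_m^i$ is independent of $S_{\tau_{m-1}(i)}$; I would therefore try to show $\sum_{m\ge 1}\Prob_i(D_m^i>S_{\tau_{m-1}(i)}+K)=\sum_{m\ge 1}\Erw_i\,\overline G\big(S_{\tau_{m-1}(i)}+K\big)<\infty$, where $\overline G(x):=\Prob_i(D_1^i>x)$, and then invoke the Borel--Cantelli lemma. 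The crux is that $\rho_n\to 1$ makes $S_{\tau_{m-1}(i)}$ diverge \emph{fast enough} relative to the tail of $D_1^i$: by the conditional independence of the increments inside a block, a deep downward excursion is caused by very negative block increments, and the positive drift keeps their frequency summable along the diverging levels $S_{\tau_{m-1}(i)}$ even in the absence of a moment on the increments. Concretely one compares the tail of $D_1^i$ with that of the block minimum and applies the Erickson-type drift criterion to the embedded walk, or one simply quotes the drift solidarity between an MRW and its embedded walks established in \cite{AlsBuck:17c}. This is exactly where the difficulty lies, since the paper assumes neither $\Erw_\pi|X_1|<\infty$ nor (for $\rho=1$) $\Erw_i\tau(i)^2<\infty$, so the summability must be extracted from the drift alone and not from integrability of $D_1^i$.

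Should only the weaker conclusion $\inf_{n\ge 0}S_n>-\infty$ $\Prob_i$-a.s.\ be available from the Borel--Cantelli step, I would finish by a global-dip bound rather than the last-exit argument. Conditioning at the return epoch $\tau_{2n}(i)$, where $M_{\tau_{2n}(i)}=i$, the strong Markov property makes the post-$\tau_{2n}(i)$ process an independent fresh copy of the MRW started from $i$; hence $\Prob_i(\exists\,k\in[\tau_{2n}(i),\tau_{rn}(i)]:S_k\le 0)\le\Prob_i(S_{\tau_{2n}(i)}\le\Delta_n)$, where $\Delta_n:=-\min_{0\le l\le\tau_{(r-2)n}(i)}S_l$ is formed from the fresh copy and is therefore independent of $S_{\tau_{2n}(i)}$. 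As $n\to\infty$ the quantity $\Delta_n$ increases to $-\inf_{l\ge 0}S_l<\infty$ $\Prob_i$-a.s., while $S_{\tau_{2n}(i)}\to+\infty$ in $\Prob_i$-probability, so the right-hand side tends to $0$ and \eqref{eq:BertoinDoney Lemma 1(ii)} follows.
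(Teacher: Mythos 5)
Your proposal rests on the claim that $\rho_{n}\to 1$ forces $S_{\tau_{n}(i)}\to+\infty$ $\Prob_{i}$-a.s., and everything downstream (finiteness of the last exit time $\nu$, a.s.\ boundedness of $\inf_{n}S_{n}$, the Borel--Cantelli summation, and the fallback via $\Delta_{n}\uparrow-\inf_{l}S_{l}<\infty$) depends on it. That claim is false. For an ordinary zero-delayed random walk, $\Prob(S_{n}>0)\to 1$ is \emph{not} relative stability and does not imply a.s.\ drift to $+\infty$: by the Spitzer--Rogozin criterion, $S_{n}\to+\infty$ a.s.\ holds iff $\sum_{n\ge 1}n^{-1}\Prob(S_{n}\le 0)<\infty$, and there exist oscillating walks ($\liminf S_{n}=-\infty$ a.s.) with $\Prob(S_{n}\le 0)\to 0$ too slowly for this series to converge (decay of order $1/\log n$, say). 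This is exactly why Kesten's lemma, as used by Bertoin and Doney, is a nontrivial statement about a \emph{finite window} $[\tau_{2n}(i),\tau_{rn}(i)]$ rather than about all $k\ge\tau_{2n}(i)$: were your reduction available, their Lemma 1 would be vacuous. So the restriction to a window and the role of $r$ are not ``inessential''; they are the substance of the lemma.

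The paper's proof avoids any almost-sure control and works entirely in probability. Writing $D_{n}:=\min_{0\le k\le\tau_{n}(i)}S_{k}$ and taking $q_{n}$ to be the conditional $\eps$-quantile of $D_{n}$ given $\{S_{\tau_{n}(i)}>0\}$, one gets $\Prob_{i}(D_{n}\le q_{n})\ge\eps\rho_{n}$; a first-passage decomposition at $\inf\{k:D_{k}\le q_{n}\}$ then shows that $\Prob_{i}(S_{\tau_{n}(i)}\le 0)\ge\Prob_{i}(D_{n}\le q_{n})\,\min_{0\le k\le n}\Prob_{i}(S_{\tau_{k}(i)}\le -q_{n})$, so some $m\le n$ satisfies $\Prob_{i}(S_{\tau_{m}(i)}>-q_{n})\ge 1-(1-\rho_{n})/(\eps\rho_{n})$. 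Pasting $r$ independent cycle blocks whose minima stay above $q_{n}$ on top of the level $-q_{n}$ reached by time $\tau_{n+m}(i)$ produces an event of probability at least $(1-\delta)(1-\eps)^{r}\rho_{n}^{r+1}$ on which the walk is positive throughout $[\tau_{2n}(i),\tau_{rn}(i)]$. If you want to salvage your write-up, you would need to replace the a.s.\ drift input by an argument of this quantitative type; as it stands, the proposal does not prove the lemma.
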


\begin{proof}
We adapt the argument given by Bertoin and Doney \cite[Lemma 1]{BertoinDoney:97} and prove that
\begin{equation}\label{eq:BertoinDoney Lemma 1(i)}
\Prob_{i}(S_{k}>0\text{ for }\tau_{2n}(i)\le k\le \tau_{rn}(i))\ \ge\ (1-\delta)(1-\eps)^{r}\rho_{n}^{r+1}.
\end{equation}
for any fixed integer $r>2$ and $\eps\in (0,1)$, where $\delta=\delta(\eps,n):=(1-\rho_{n})/\eps\rho_{n}\ge 0$. Obviously, this implies \eqref{eq:BertoinDoney Lemma 1(ii)}.

\vspace{.1cm}
Fix any $\eps\in (0,1)$, put $D_{n}:=\min_{0\le k\le\tau_{n}(i)}S_{k}$ for $n\in\N$ and let $q_{n}$ be the conditional $\eps$-quantile of $D_{n}$ given $S_{\tau_{n}(i)}>0$, thus $q_{n}\le 0$ and
$$ \Prob_{i}(D_{n}<q_{n}|S_{\tau_{n}(i)}>0)\ <\ \eps\ \le\ \Prob_{i}(D_{n}\le q_{n}|S_{\tau_{n}(i)}>0). $$
As a consequence,
\begin{equation}\label{eq:lower bound D_n >= q_n}
\Prob_{i}(D_{n}\le q_{n})\ \ge\ \eps\rho_{n}.
\end{equation}
Now put $\nu:=\inf\{k:D_{k}\le q_{n}\}$. Then
\begin{align*}
\Prob_{i}(S_{\tau_{n}(i)}\le 0)\ &\ge\ \Prob_{i}(S_{\tau_{n}(i)}\le 0,D_{n}\le q_{n})\\
&\ge\ \sum_{k\le n}\Prob_{i}(\nu=k)\,\Prob_{i}(S_{\tau_{n-k}(i)}\le-q_{n})\\
&\ge\ \Prob_{i}(D_{n}\le q_{n})\,\min_{0\le k\le n}\Prob_{i}(S_{\tau_{k}(i)}\le-q_{n})
\end{align*}
which in combination with \eqref{eq:lower bound D_n >= q_n} gives
$$ \min_{0\le k\le n}\Prob_{i}(S_{\tau_{k}(i)}\le-q_{n})\ \le\ \frac{1-\rho_{n}}{\eps\rho_{n}}\ =\ \delta $$
and thus $\Prob_{i}(S_{\tau_{m}(i)}>-q_{n})\ge 1-\delta$ for some integer $m=m(\eps,n)\le n$.

\vspace{.1cm}
Finally, consider the event
\begin{align*}
&S_{\tau_{n}(i)}>0,\ S_{\tau_{n+m}(i)}-S_{\tau_{n}(i)}>-q_{n},\ S_{\tau_{(s+1)n+m}(i)}-S_{\tau_{sn+m}(i)}>0,\\
&\quad\min_{0\le j\le n}\big(S_{\tau_{sn+m}(i)+j}-S_{\tau_{sn+m}(i)}\big)\ \ge\ q_{n},\quad s=1,\ldots,r
\end{align*}
on which we have $S_{k}>0$ for all $\tau_{n+m}(i)\le k\le \tau_{rn+m}(i)$. Since $m\le n$, the asserted inequality \eqref{eq:BertoinDoney Lemma 1(i)} follows.\qed
\end{proof}

In order to prove Assertion (d) of Theorem \ref{thm:MRW arcsine} given that (a)--(c) hold, choose $m=m_{n}:=\lceil(4\pi_{i})^{-1}n\rceil$ for $n\in\N$ and note that $n^{-1}\tau_{n}(i)\to\pi_{i}^{-1}$ $\Prob_{i}$-a.s. implies
$$ \lim_{n\to\infty}\Prob_{i}(\tau_{2m}(i)>n\text{ or }\tau_{8m}(i)<n)\ =\ 0. $$
Since, furthermore,
\begin{align*}
\Prob_{i}(S_{n}>0)\ &\ge\ \Prob_{i}(S_{k}>0\text{ for }\tau_{2m}(i)\le k\le\tau_{8m}(i))\ -\ \Prob_{i}(\tau_{2m}(i)>n\text{ or }\tau_{8m}(i)<n),
\end{align*}
we finally infer with the help of \eqref{eq:BertoinDoney Lemma 1(ii)}
$$ \liminf_{n\to\infty}\Prob_{i}(S_{n}>0)\ =\ 1. $$

\acknowledgement{We are most grateful to an anonymous referee for pointing out an error in an earlier version of this article. Both authors were partially supported by the Deutsche Forschungsgemeinschaft (SFB 878).}

\bibliographystyle{abbrv}
\bibliography{StoPro}

\end{document}